\newtheorem{thm}{Theorem}
\newtheorem{prop}[thm]{Proposition}
\theoremstyle{definition}
\newtheorem{defn}[thm]{Definition}
\newtheorem{exmp}[thm]{Example}
\theoremstyle{remark}
\newtheorem{con}{Conjecture}
\title{Interlace Polynomials: Enumeration, Unimodality, and Connections to Codes}
\author{Lars Eirik Danielsen\thanks{Department of Informatics, University of Bergen, 
PO Box 7803, \mbox{N-5020} Bergen, Norway.\hfill
\texttt{\{\href{mailto:larsed@ii.uib.no}{larsed},\href{mailto:matthew@ii.uib.no}{matthew}\}@ii.uib.no}$\quad$
\texttt{http://www.ii.uib.no/\~{}\{\href{http://www.ii.uib.no/\~larsed}{larsed},\href{http://www.ii.uib.no/\~matthew}{matthew}\}}} \and Matthew G. Parker\footnotemark[1]}
\date{October 1, 2009}
\begin{document}

\maketitle

\begin{abstract}
The \emph{interlace polynomial} $q$ was introduced by
Arratia, Bollobás, and Sorkin. It encodes many properties of the orbit of 
a graph under \emph{edge local complementation} (ELC). The interlace polynomial $Q$,
introduced by Aigner and van der Holst, similarly contains information about the
orbit of a graph under \emph{local complementation} (LC).
We have previously classified LC and ELC orbits, and now give an enumeration of the corresponding
interlace polynomials of all graphs of order up to 12.
An enumeration of all \emph{circle graphs} of order up to 12 is also given.
We show that there exist graphs of all orders greater than 9 with
interlace polynomials $q$ whose coefficient sequences
are \emph{non-unimodal}, thereby disproving a conjecture by Arratia~et~al.
We have verified that for graphs of order up to 12, all polynomials $Q$ have unimodal coefficients.
It has been shown that LC and ELC orbits of graphs correspond to 
equivalence classes of certain \emph{error-correcting codes} and \emph{quantum states}.
We show that the properties of these codes and quantum states are related to properties
of the associated interlace polynomials.
\end{abstract}

\section{Introduction}

A \emph{graph} is a pair $G=(V,E)$ where $V$ is a set of \emph{vertices},
and $E \subseteq V \times V$ is a set of \emph{edges}. The \emph{order} of $G$
is $n = |V|$.
We will only consider \emph{simple} \emph{undirected} graphs, i.e., 
graphs where all edges are bidirectional and no vertex can be adjacent to itself.
The \emph{neighbourhood} of $v \in V$, denoted $N_v \subset V$, is the set of 
vertices connected to $v$ by an edge. 
The number of vertices adjacent to $v$ is called the \emph{degree} of $v$.
An \emph{Eulerian graph} is a graph where all vertices have even degree.
The \emph{induced subgraph} of $G$ on $W \subseteq V$ 
contains vertices $W$ and all edges from $E$ whose endpoints are both in $W$.
The \emph{complement} of $G$ is found by replacing $E$ with $V \times V - E$,
i.e., the edges in $E$ are changed to non-edges, and the non-edges to edges.
Two graphs $G=(V,E)$ and $G'=(V,E')$ are \emph{isomorphic} if and only if
there exists a permutation $\pi$ on $V$ such that $\{u,v\} \in E$ if and 
only if $\{\pi(u), \pi(v)\} \in E'$.
A \emph{path} is a sequence of vertices, $(v_1,v_2,\ldots,v_i)$, such that
$\{v_1,v_2\}, \{v_2,v_3\},$ $\ldots, \{v_{i-1},v_{i}\} \in E$.
A graph is \emph{connected} if there is a path from any vertex to any other vertex in the graph.
A graph is \emph{bipartite} if its set of vertices can be decomposed into two disjoint sets 
such that no two vertices within the same set are adjacent.
A \emph{complete graph} is a graph where all pairs of vertices are
connected by an edge. A \emph{clique} is a complete subgraph.
A $k$-clique is a clique consisting of $k$ vertices.
An \emph{independent set} is the complement of a clique, i.e., an empty subgraph.
The \emph{independence number} of $G$ is the size of the largest independent set in $G$.

\begin{defn}[\hspace{1pt}\hspace{-1pt}\cite{flaas,bouchet,hubert}]
Given a graph $G=(V,E)$ and a vertex $v \in V$,
let $N_v \subset V$ be the neighbourhood of $v$.
\emph{Local complementation} (LC) on $v$ transforms $G$ into $G * v$ by
replacing the induced subgraph of $G$ on $N_v$ by its complement. (Fig.~\ref{fig:lcexample})
\end{defn}

\begin{figure}
 \centering
 \subfloat[The Graph $G$]
 {\hspace{5pt}\includegraphics[width=.28\linewidth]{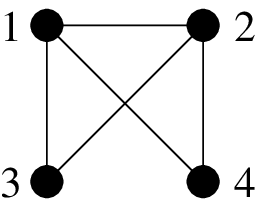}\hspace{5pt}\label{fig:lcexample1}}
 \quad
 \subfloat[The Graph $G*1$]
 {\hspace{5pt}\includegraphics[width=.28\linewidth]{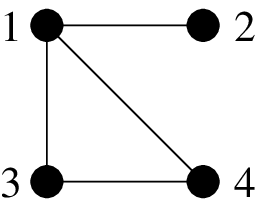}\hspace{5pt}\label{fig:lcexample2}}
 \caption{Example of Local Complementation}\label{fig:lcexample}
\end{figure}

\begin{defn}[\hspace{1pt}\hspace{-1pt}\cite{bouchet}]\label{prop:triplelc}
Given a graph $G=(V,E)$ and an edge $\{u,v\} \in E$, \emph{edge local complementation} (ELC)
on $\{u,v\}$ transforms $G$ into $G^{(uv)} = G*u*v*u = G*v*u*v$.
\end{defn}

\begin{defn}[\hspace{1pt}\hspace{-1pt}\cite{bouchet}]\label{def:elc}
ELC on $\{u,v\}$ can equivalently be defined as follows. Decompose $V\setminus \{u,v\}$ 
into the following four disjoint sets, as visualized in Fig.~\ref{fig:elc}.
\renewcommand{\labelenumi}{$\Alph{enumi}$}
\begin{enumerate}
\item Vertices adjacent to $u$, but not to $v$.
\item Vertices adjacent to $v$, but not to $u$.
\item Vertices adjacent to both $u$ and $v$.
\item Vertices adjacent to neither $u$ nor $v$.
\end{enumerate}
To obtain $G^{(uv)}$, perform the following procedure.
For any pair of vertices $\{x,y\}$, where $x$ belongs to class $A$, $B$, or $C$,
and $y$ belongs to a different class $A$, $B$, or $C$, ``toggle'' the pair $\{x, y\}$,
i.e., if $\{x,y\} \in E$, delete the edge, and if $\{x,y\} \not\in E$, add the edge
$\{x,y\}$ to $E$. Finally, swap the labels of vertices $u$ and $v$.
\end{defn}

\begin{figure}
 \centering
 \includegraphics[width=.43\linewidth]{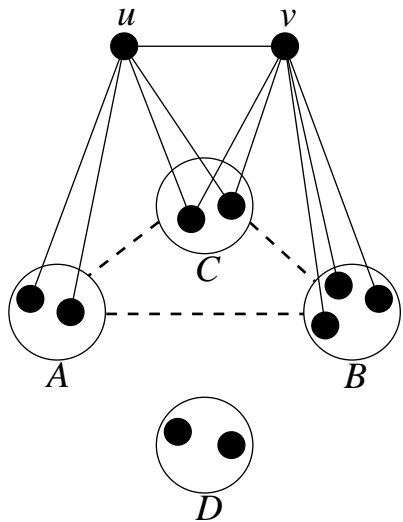}
 \caption{Visualization of the ELC Operation}\label{fig:elc}
\end{figure}

\begin{defn}
The \emph{LC orbit} of a graph $G$ is the set of all unlabeled graphs that
can be obtained by performing any sequence of LC operations on~$G$.
Similarly, the \emph{ELC orbit} of $G$ comprises all unlabeled graphs that
can be obtained by performing any sequence of ELC operations on~$G$.
\end{defn}

The LC operation was first defined by de Fraysseix~\cite{hubert}, and later studied
by Fon-der-Flaas~\cite{flaas} and Bouchet~\cite{bouchet}. Bouchet defined ELC 
as ``complementation along an edge''~\cite{bouchet}, but this operation is also 
known as \emph{pivoting} on a graph~\cite{interlace}.

The recently defined \emph{interlace polynomials} are based on the LC and ELC operations. 
Arratia, Bollobás, and Sorkin~\cite{interlace} defined the interlace polynomial $q(G)$ of 
the graph $G$. This work was motivated by a problem related to DNA sequencing~\cite{eulerdna}.

\begin{defn}[\hspace{1pt}\hspace{-1pt}\cite{interlace}]\label{def:q}
For every graph $G$, there is an associated interlace polynomial $q(G,x)$,
which we will usually denote $q(G)$ for brevity.
For the edgeless graph of order $n$, $E_n=(V,\emptyset)$, $q(E_n) = x^n$.
For any other graph $G=(V,E)$, choose an arbitrary edge $\{u,v\} \in E$, and let
\[
q(G) = q(G \setminus u) + q(G^{(uv)} \setminus u),
\]
where $G \setminus u$ is the graph $G$ with vertex $u$ and all edges incident on $u$ removed.
\end{defn}

It was proven by Arratia~et~al.~\cite{interlace} that the polynomial is independent
of the order of removal of edges, and that the polynomial is invariant under ELC, i.e.,
that $q(G) = q(G^{(uv)})$ for any edge $\{u,v\}$. 

Aigner and van der Holst~\cite{aigner} later defined the interlace polynomial
$Q(G)$ which similarly encodes properties of the LC orbit of $G$.

\begin{defn}[\hspace{1pt}\hspace{-1pt}\cite{aigner}]\label{def:Q}
For every graph $G$, there is an associated interlace polynomial $Q(G,x)$,
which we will usually denote $Q(G)$ for brevity.
For the edgeless graph of order $n$, 
$E_n=(V,\emptyset)$, $Q(E_n) = x^n$.
For any other graph $G=(V,E)$, choose an arbitrary edge $\{u,v\} \in E$, and let
\[
Q(G) = Q(G \setminus u) + Q(G^{(uv)} \setminus u) + Q(G*u \setminus u).
\]
\end{defn}

Again, the order of removal of edges is irrelevant, and the polynomial is invariant under LC and ELC.
It was shown by Aigner and van der Holst~\cite{aigner} that both $q(G)$ and $Q(G)$ can also
be derived from the ranks of matrices obtained by certain modifications of the adjacency matrix of $G$.
A similar approach, but expressed in terms of certain sets of \emph{local unitary transforms},
was shown by Riera and Parker~\cite{genbent}. 
If $G$ is an unconnected graph with components $G_1$ and $G_2$, 
then $q(G) = q(G_1)q(G_2)$ and $Q(G) = Q(G_1)Q(G_2)$.

The interlace polynomials $q(G)$ and $Q(G)$ summarize several properties of the ELC and LC orbits
of the graph $G$. The degree of the lowest-degree term of $q(G)$
equals the number of connected components of $G$, and is therefore one for a connected graph~\cite{interlace}.
The degree of $q(G)$ equals the maximum independence number over all graphs in the ELC orbit of $G$~\cite{aigner}. 
It follows that the degree of $q(G)$ is also an upper bound on the independence number of $G$.
Likewise, the degree of $Q(G)$ gives the size of the largest independent set 
in the LC orbit of~$G$~\cite{setapaper}.
The degree of $Q(G)$ will always be greater than or equal to the degree of $q(G)$.
Evaluating interlace polynomials for certain values of $x$ can also gives us 
some information about the associated graphs.
For a graph $G$ of order $n$, it always holds that $q(G,2) = 2^n$ and $Q(G,3) = 3^n$.
$q(G,1)$ equals the number of induced subgraphs of $G$ with an odd number of \emph{perfect matchings}~\cite{aigner}.
$Q(G,2)$ equals the number of general induced subgraphs of $G$ (with possible loops attached to the vertices)
with an odd number of general perfect matchings~\cite{aigner}.
$Q(G,4)$ equals $2^n$ times the number of induced Eulerian subgraphs of $G$~\cite{aigner}.
It has been shown that $q(G,-1) = (-1)^n 2^{n-r}$, where $n$ is the order of $G$
and $r$ is the rank over $\mathbb{F}_2$ of $\Gamma + I$, where $\Gamma$ is the adjacency 
matrix of $G$~\cite{aigner,atminusone}.
$q(G,3)$ is always divisible by $q(G,1)$, and the quotient is an odd integer~\cite{aigner}.

\begin{figure}
 \centering
 {
 \subfloat[]{\hspace{5pt}\includegraphics[width=.35\linewidth]{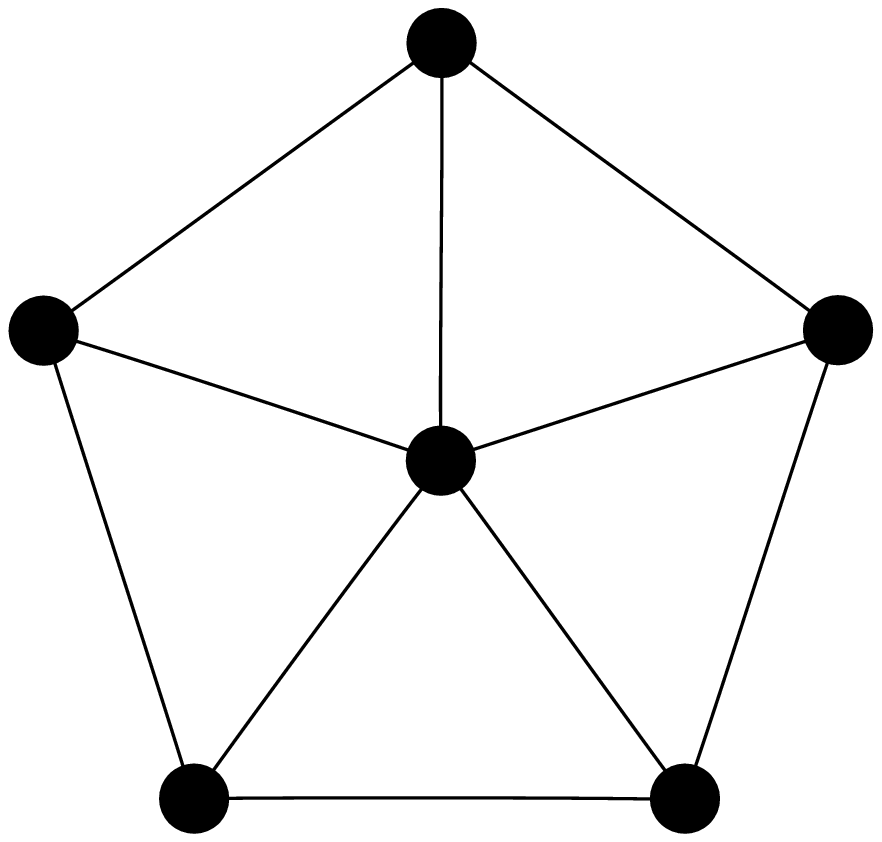}\label{fig:wheel}\hspace{5pt}}
 \quad
 \subfloat[]{\hspace{5pt}\includegraphics[width=.35\linewidth]{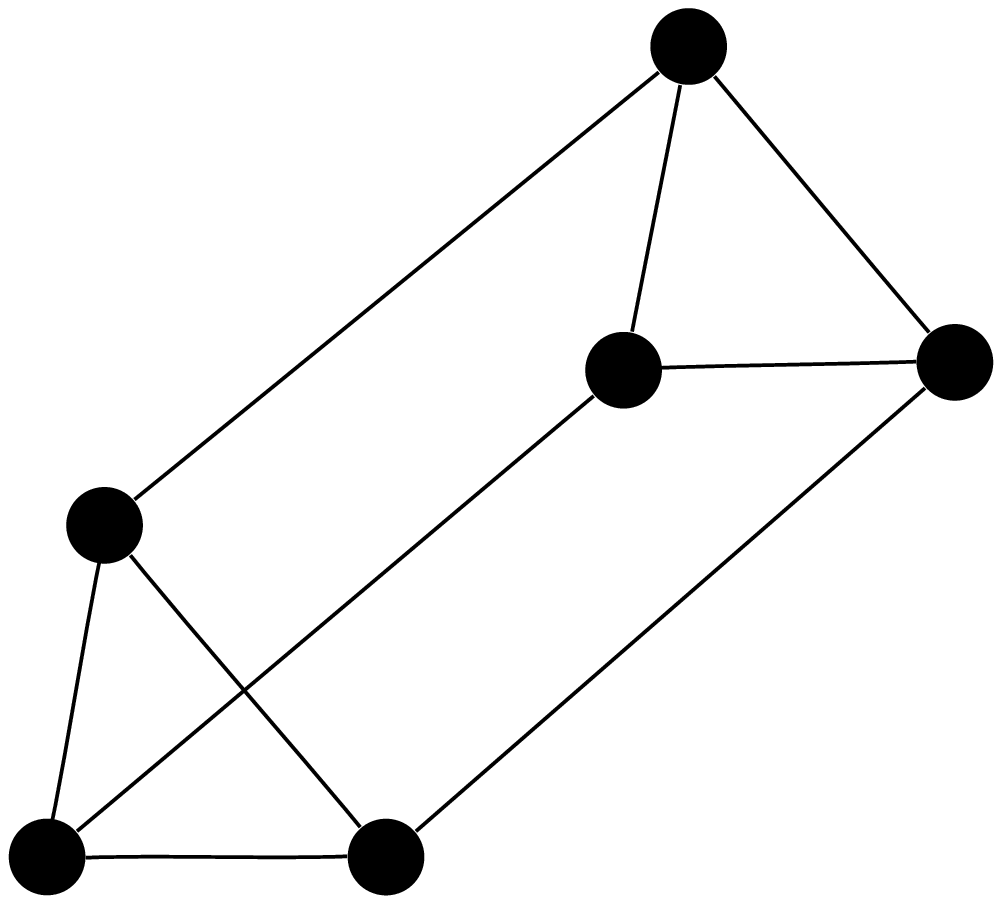}\label{fig:doubleclique}\hspace{5pt}}
 }
 \caption{Example of an LC Orbit}\label{fig:hexacodes}
\end{figure}

\begin{exmp}
The two graphs in Fig.~\ref{fig:hexacodes} comprise an LC orbit, and an ELC orbit.
(Note that in general, an LC orbit can be decomposed into one or more ELC orbits.)
Both graphs have interlace polynomials
$q(G) = 12x + 10x^2$ and $Q(G) = 108x + 45x^2$. The fact that $\deg(Q) = 2$ matches the observation
that none of the two graphs have an independent set of size greater than two.
That $\frac{Q(G,4)}{2^6} = 18$ means that each graph has 18 Eulerian subgraphs.
\end{exmp}

In their list of open problems~\cite{interlace}, Arratia~et~al. pose the question
of how many different interlace polynomials there are for graphs of order $n$.
In Section~\ref{sec:class}, 
we answer this question for $n \le 12$, for both interlace polynomials $q$ and $Q$.

In the DNA sequencing setting~\cite{eulerdna}, interlace polynomials of \emph{circle graphs}
are of particular interest. Arratia~et~al.~\cite{eulerdna} enumerated the circle graphs of order
up to 9. In Section~\ref{sec:circle}, we extend this enumeration to order 12.

Let $q(G) = a_1x + a_2x^2 + \cdots + a_d x^d$. Then the sequence of
coefficients of $q$ is $\{a_i\} = (a_1, a_2, \ldots, a_d)$.
Arratia~et~al.~\cite{interlace} conjecture that this sequence is \emph{unimodal}
for all $q$. The sequence $\{a_i\}$ is unimodal if, for some $1 \le k \le d$,
$a_i \le a_j$ for all $i < j \le k$, and $a_i \ge a_j$ for all $i > j \ge k$.
In other words, the sequence is non-decreasing up to some coefficient $k$,
and the rest of the sequence is non-increasing.
In Section~\ref{sec:nonuni}, we show that there exist interlace polynomials $q$ whose 
coefficient sequences are non-unimodal, and thereby disprove the conjecture by Arratia~et~al.
Our enumeration shows that all interlace polynomials of graphs of order up to 9 are unimodal, but
that there are two graphs of order 10 with the same non-unimodal interlace polynomial.
From these graphs of order 10 it is possible 
to construct graphs of any order greater than 10 with non-unimodal interlace polynomials.
We verify that all interlace polynomials $Q(G)$ and all polynomials $x \cdot q(G,x+1)$ of graphs
of order up to 12 have unimodal coefficients.

In Section~\ref{sec:codes} we highlight an interesting relationship between
interlace polynomials, error-correcting codes, and quantum states.
The LC orbit of a graph corresponds to the equivalence class of a 
\emph{self-dual quantum code}~\cite{nest},
and ELC orbits of bipartite graphs correspond to  
equivalence classes of \emph{binary linear codes}~\cite{pivotlinear}.
In both cases, the \emph{minimum distance} of a code is given by $\delta + 1$, where $\delta$ is the
minimum vertex degree over all graphs in the corresponding orbit.
We have previously shown~\cite{setapaper} that a self-dual quantum code with high minimum distance
often corresponds to a graph $G$ where $\deg(Q)$, the degree of $Q(G)$, is small.
A self-dual quantum code can also be interpreted as 
a \emph{quantum graph state}~\cite{hein}. A code with high minimum distance will
correspond to a quantum state with a high degree of \emph{entanglement}.
The degree of $Q(G)$ gives an indicator of the entanglement in the graph state represented
by $G$ known as the \emph{peak-to-average power ratio}~\cite{setapaper} 
with respect to certain transforms. Another indicator of the 
entanglement in a graph state is the \emph{Clifford merit factor} (CMF)~\cite{cmf},
which can be derived from the evaluation of $Q(G)$ at $x=4$~\cite{interlacespectral}.
In Section~\ref{sec:codes} we give the range of possible values of $\delta$, $\deg(Q)$, and $Q(G,4)$
for graphs of order up to 12, and bounds on these parameters for graphs of order up to 25,
derived from the best known self-dual quantum codes.

\section{Enumeration of Interlace Polynomials}\label{sec:class}

In the context of error-correcting codes, we have previously classified the 
LC orbits~\cite{selfdual} and ELC orbits~\cite{pivotboolean,pivotlinear} 
of all graphs on up to 12 vertices.
In Table~\ref{tab:lcorbits}, the sequence $\{c_{L,n}\}$ gives the number of LC orbits 
of connected graphs on $n$ vertices, while $\{t_{L,n}\}$ gives the total number of 
LC orbits of graphs on $n$ vertices. Similarly, the sequence $\{c_{E,n}\}$ gives the number of ELC orbits 
of connected graphs on $n$ vertices, while $\{t_{E,n}\}$ gives the total number of 
ELC orbits of graphs on $n$ vertices. A database containing one representative from each LC orbit is available 
at \url{http://www.ii.uib.no/~larsed/vncorbits/}. A similar database of ELC orbits
can be found at \url{http://www.ii.uib.no/~larsed/pivot/}.

\begin{table}
\centering
\caption{Number of LC and ELC Orbits}
\label{tab:lcorbits}
\begin{tabular}{rrrrr}
\toprule
$n$ & $c_{L,n}$ & $t_{L,n}$ & $c_{E,n}$ & $t_{E,n}$ \\
\midrule
1  &1         &1         &1           &1           \\
2  &1         &2         &1           &2           \\
3  &1         &3         &2           &4           \\
4  &2         &6         &4           &9           \\
5  &4         &11        &10          &21          \\
6  &11        &26        &35          &64          \\
7  &26        &59        &134         &218         \\
8  &101       &182       &777         &1068       \\
9  &440       &675       &6702        &8038       \\
10 &3132      &3990      &104,825     &114,188     \\
11 &40,457    &45,144    &3,370,317   &3,493,965   \\
12 &1,274,068 &1,323,363 &231,557,290 &235,176,097 \\
\bottomrule
\end{tabular}
\end{table}

Note that the value of $t_n$ (for either ELC or LC orbits)
can be derived easily once the sequence $\{c_m\}$ is known 
for $1 \le m \le n$, using the \emph{Euler transform}~\cite{sloane2},
\begin{eqnarray*}
a_n &=& \sum_{d|n} d c_d,\\
t_1 &=& a_1,\\
t_n &=& \frac{1}{n}\left( a_n + \sum_{k=1}^{n-1} a_k t_{n-k} \right).
\end{eqnarray*}

The question of how many distinct interlace polynomials there are for graphs of order $n$
was posed by Arratia~et~al.~\cite{interlace}. For a representative from each LC and ELC orbit,
we have calculated the interlace polynomials $Q$ and $q$, respectively. We then counted the number
of distinct interlace polynomials.
In Table~\ref{tab:polys}, the sequence $\{c_{Q,n}\}$ gives the number of interlace polynomials $Q$
of connected graphs of order $n$, while $\{t_{Q,n}\}$ gives the total number of 
interlace polynomials $Q$ of graphs of order $n$. Similarly, $\{c_{q,n}\}$ and $\{t_{q,n}\}$
give the numbers of interlace polynomials $q$.
We observe that in Table~\ref{tab:polys}, the relationship $t_{q,n} = c_{q,n} + t_{q,n-1}$ holds.

\begin{table}
\centering
\caption{Number of Distinct Interlace Polynomials}
\label{tab:polys}
\begin{tabular}{rrrrr}
\toprule
$n$ & $c_{Q,n}$ & $t_{Q,n}$ & $c_{q,n}$ & $t_{q,n}$ \\
\midrule
1  &        1 &        1 &          1 &          1 \\
2  &        1 &        2 &          1 &          2 \\
3  &        1 &        3 &          2 &          4 \\
4  &        2 &        6 &          4 &          8 \\
5  &        4 &       11 &          9 &         17 \\
6  &       10 &       24 &         24 &         41 \\
7  &       23 &       52 &         71 &        112 \\
8  &       84 &      152 &        257 &        369 \\
9  &      337 &      521 &       1186 &       1555 \\
10 &     2154 &     2793 &       7070 &       8625 \\
11 &   22,956 &   26,178 &     56,698 &     65,323 \\
12 &  486,488 &  515,131 &    614,952 &    680,275 \\
\bottomrule
\end{tabular}
\end{table}

\section{Enumeration of Circle Graphs}\label{sec:circle}

A graph $G$ is a \emph{circle graph} if each vertex in $G$ can be represented as a chord
on a circle, such that two chords intersect if and only if there is an edge between
the two corresponding vertices in $G$. An example of a circle graph and its
corresponding circle diagram is given in Fig.~\ref{fig:circleexample}.

\begin{figure}
 \centering
 \subfloat[The Circle Graph $G$]
 {\hspace{5pt}\includegraphics[height=.2\linewidth]{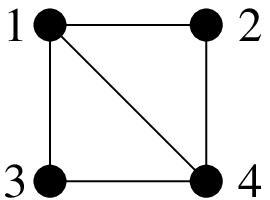}\hspace{5pt}\label{fig:circleexample1}}
 \quad
 \subfloat[The Circle Representation of $G$]
 {\hspace{5pt}\includegraphics[height=.3\linewidth]{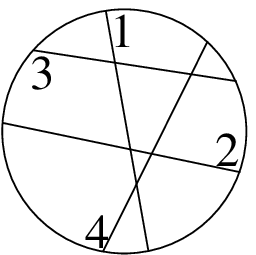}\hspace{5pt}\label{fig:circleexample2}}
 \caption{Example of a Circle Graph}\label{fig:circleexample}
\end{figure}

\begin{figure}
 \centering
 \includegraphics[height=.25\linewidth]{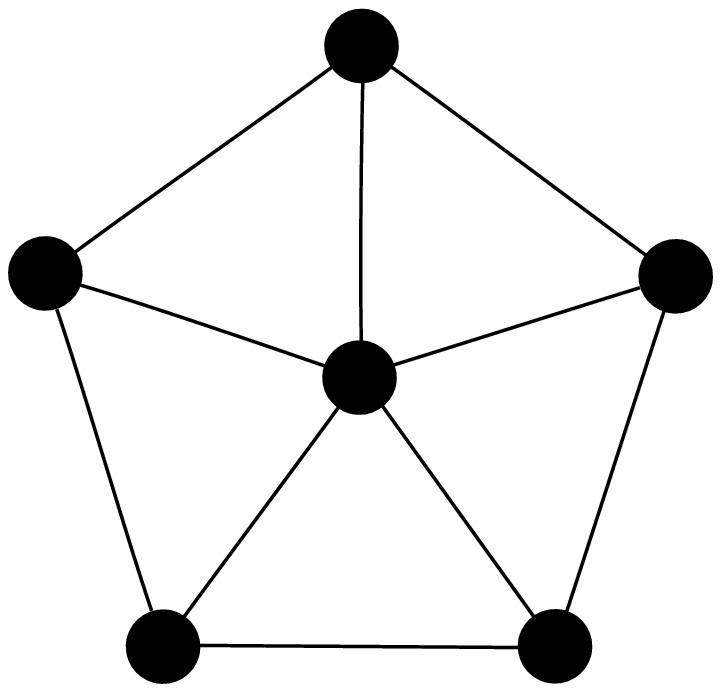}
 \quad
 \includegraphics[height=.25\linewidth]{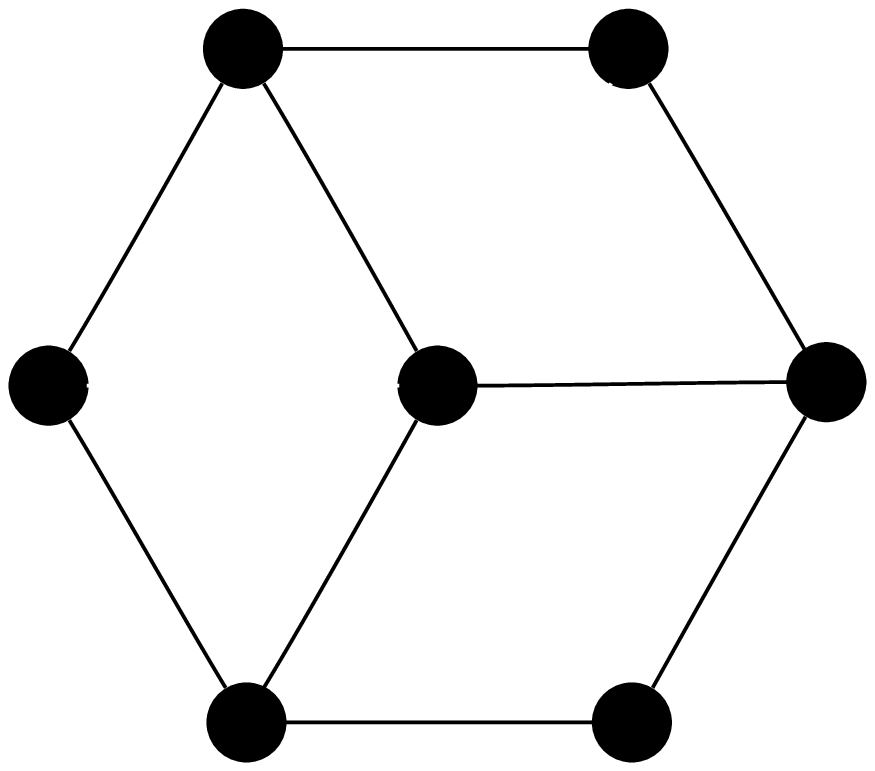}
 \quad
 \includegraphics[height=.25\linewidth]{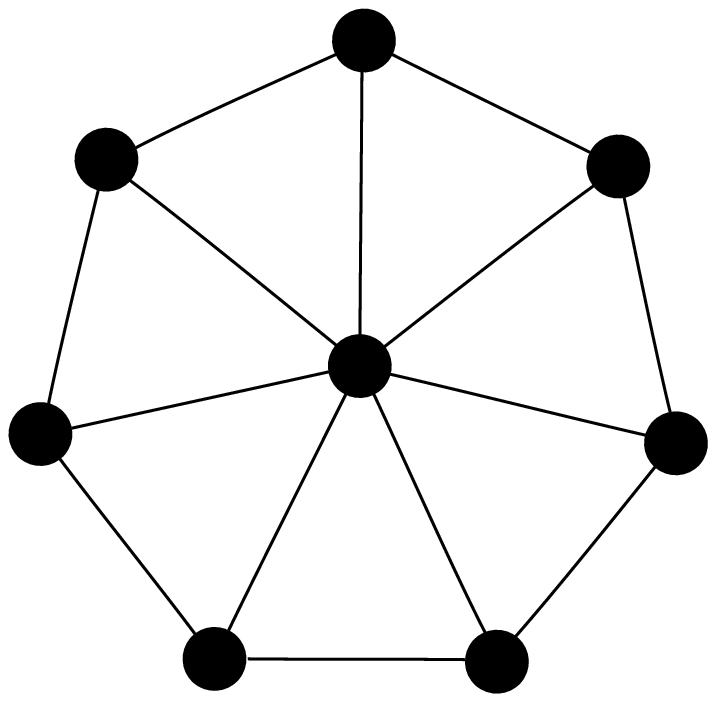}
 \caption{Circle Graph Obstructions}\label{fig:obstructions}
\end{figure}

Whether a given graph is a circle graph can be recognized in polynomial time~\cite{spinrad}. 
It is also known that LC operations will map a circle graph to a circle graph, 
and a non-circle graph to a non-circle graph~\cite{bouchetcircle}. Bouchet~\cite{bouchetcircle}
proved that a graph $G$ is a circle graph if and only if certain \emph{obstructions}, shown
in Fig.~\ref{fig:obstructions}, do not appear as subgraphs anywhere in the LC orbit of $G$.

Arratia~et~al.~\cite{eulerdna} pointed out that an enumeration of circle graphs
did not seem to have appeared in the literature before, and then gave an enumeration
of circle graphs of order up to 9. Using our previous classification of LC orbits, and
the fact that the circle graph property is preserved by LC operations, we are able to
generate all circle graphs of order up to 12.
In Table~\ref{tab:circle}, the sequence $\{c_{c,n}\}$ gives the number of connected circle graphs
of order $n$, while $\{t_{c,n}\}$ gives the total number of circle graphs of order $n$.
The sequences $\{c_{c,n}'\}$ and $\{t_{c,n}'\}$ give the number of LC orbits containing
circle graphs.
A database with one representative from each LC orbit of connected circle graphs is available 
at \url{http://www.ii.uib.no/~larsed/circle/}.

\begin{table}
\centering
\caption{Number of Circle Graphs}
\label{tab:circle}
\begin{tabular}{rrrrr}
\toprule
$n$ & $c_{c,n}$ & $t_{c,n}$ & $c_{c,n}'$ & $t_{c,n}'$ \\
\midrule
1  &           1 &           1 &           1 &           1 \\
2  &           1 &           2 &           1 &           2 \\
3  &           2 &           4 &           1 &           3 \\
4  &           6 &          11 &           2 &           6 \\
5  &          21 &          34 &           4 &          11 \\
6  &         110 &         154 &          10 &          25 \\
7  &         789 &         978 &          23 &          55 \\
8  &        8336 &        9497 &          81 &         157 \\
9  &     117,283 &     127,954 &         293 &         499 \\
10 &   2,026,331 &   2,165,291 &        1403 &        2059 \\
11 &  40,302,425 &  42,609,994 &        7968 &      10,543 \\
12 & 892,278,076 & 937,233,306 &      55,553 &      68,281 \\
\bottomrule
\end{tabular}
\end{table}

\section{Unimodality}\label{sec:nonuni}

Having calculated the interlace polynomials $q$ of all graphs of order up to 12, it
was possible to check whether their coefficient sequences were unimodal, as 
conjectured by Arratia~et~al.~\cite{interlace}. Note that a similar conjecture
has been disproved for the related \emph{Tutte polynomial}~\cite{unitutte}.

Our results show that all interlace polynomials $q$ of graphs of order $n \le 9$ are
unimodal, but that for $n=10$ there exists a single non-unimodal interlace polynomial with
coefficient sequence $\{a_i\} = (2,7,6,7,4,3,2,1,0,0)$.
Only two graphs on 10 vertices, comprising a single ELC orbit, correspond to this polynomial.
One of these graphs is shown in Fig.~\ref{fig:nonunimodal}.

\begin{figure}
 \centering
 \includegraphics[width=.40\linewidth]{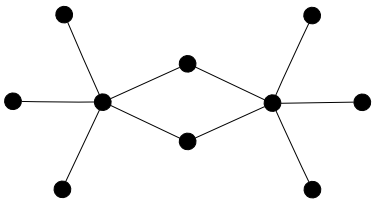}
 \caption{The Smallest Graph with Non-Unimodal Interlace Polynomial $q$}\label{fig:nonunimodal}
\end{figure}

We have further found that, up to ELC equivalence, there are 4 graphs on 11 vertices with
non-unimodal interlace polynomials, 3 of which are connected graphs, and 20 graphs
on 12 vertices with non-unimodal polynomials, 15 of which are connected.

Given the single non-unimodal interlace polynomial of a graph of order $n=10$, it is easy to show that
there must exist non-unimodal interlace polynomials for all $n>10$,
since the following methods of extending a graph will preserve the non-unimodality of the associated
interlace polynomial. Given a graph $G$ on $n$ vertices with non-unimodal interlace polynomial, 
we can add an isolated vertex to obtaining an unconnected graph $G'$ on $n+1$ vertices, 
where $q(G') = x \cdot q(G)$ is clearly also non-unimodal. Non-unimodality
is also preserved by \emph{substituting} a vertex $v$ of $G$ by a clique of size $m$, i.e.,
we obtain $G'$ where $v$ is replaced by $m$ vertices, all connected to each other and all 
connected to $w$ whenever $\{v,w\}$ is an edge in $G$. It can then be shown that $q(G') =
2^m q(G)$~\cite[Prop.~38]{interlace}.

\begin{prop}~\label{prop:duplicate}
Given a graph $G$, let $G'$ be the graph obtained by \emph{duplicating} a
vertex $v$ of $G$, i.e., by adding a vertex $v'$ such that $v'$ is connected to $w$ whenever 
$\{v,w\}$ is an edge in $G$.
The interlace polynomial of $G$ can be written $q(G) = a(x) + cx^j + x^{j+1} b(x)$, where
$a$ and $b$ are arbitrary polynomials, $c$ is a constant, and $j = \deg(a) + 1$.
The unimodality or lack thereof of $G$ will be preserved in $G'$ if $q(G \setminus v) = a(x) + x^j b(x)$.
\end{prop}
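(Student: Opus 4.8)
The plan is to reduce everything to a single closed-form identity expressing $q(G')$ in terms of $q(G)$ and $q(G\setminus v)$, and then to read off the effect on the coefficient sequence. The identity I aim for is
\[
q(G') = (1+x)\,q(G) - x\,q(G\setminus v).
\]
Assume first that $v$ has a neighbour $w$. Applying the recursion of Definition~\ref{def:q} to $G'$ along the edge $\{v,w\}$ gives $q(G')=q(G'\setminus v)+q(G'^{(vw)}\setminus v)$. Since $v'$ is a false twin of $v$, deleting $v$ from $G'$ returns a relabelled copy of $G$, so $q(G'\setminus v)=q(G)$. Applying the same recursion to $G$ along $\{v,w\}$ gives $q(G^{(vw)}\setminus v)=q(G)-q(G\setminus v)$. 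Hence the identity will follow once I show
\[
q(G'^{(vw)}\setminus v)=x\cdot q(G^{(vw)}\setminus v).
\]

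The core of the argument, and the step I expect to be the main obstacle, is this last equality, which I would prove by a structural analysis of the pivot using Definition~\ref{def:elc}. Two facts are needed. First, because the class of each vertex is determined solely by its adjacency to $v$ and $w$, deleting the non-pivot vertex $v'$ commutes with the pivot, i.e. $G'^{(vw)}\setminus v'=(G'\setminus v')^{(vw)}=G^{(vw)}$, and consequently $G'^{(vw)}\setminus\{v,v'\}=G^{(vw)}\setminus v$. Second, in $G'$ the twin $v'$ lies in class $B$ (adjacent to $w$ but not $v$), and its neighbourhood $N_v\setminus\{w\}$ consists entirely of class-$A$ and class-$C$ vertices; the pivot therefore toggles away every edge at $v'$ except the untouched edge $\{v',w\}$, which the final relabelling turns into the single edge $\{v',v\}$. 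Thus $v'$ becomes a pendant at $v$, so $G'^{(vw)}\setminus v$ is exactly $G^{(vw)}\setminus v$ together with the now-isolated vertex $v'$, and the factor of $x$ follows from multiplicativity over components. The remaining degenerate case, where $v$ is isolated, is immediate: then $G'$ is $G$ with an extra isolated vertex, $q(G')=x\,q(G)$, and one checks this agrees with the identity since $q(G)=x\,q(G\setminus v)$ in that case.

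Finally I would substitute the hypotheses $q(G)=a(x)+cx^{j}+x^{j+1}b(x)$ and $q(G\setminus v)=a(x)+x^{j}b(x)$ into the identity. A direct expansion cancels the $x\,q(G\setminus v)$ term against part of $(1+x)q(G)$ and yields
\[
q(G')=a(x)+c\,x^{j}+c\,x^{j+1}+x^{j+2}b(x).
\]
Comparing with $q(G)$, the coefficient sequence of $q(G')$ is obtained from that of $q(G)$ by repeating the degree-$j$ coefficient $c$ once more in the adjacent position $j+1$, after which the tail $b$ resumes shifted up by one degree. It then remains to prove the elementary sequence lemma that inserting a duplicate of an entry immediately after its occurrence preserves both unimodality (the extra equality is compatible with the non-strict inequalities in the definition) and non-unimodality (any witnessing descent--ascent triple of indices survives the insertion). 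Applying this lemma to the two coefficient sequences shows that $q(G')$ is unimodal if and only if $q(G)$ is, which is exactly the claim.
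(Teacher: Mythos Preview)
Your proof is correct and follows essentially the same route as the paper's: use the duplication identity $q(G')=(1+x)\,q(G)-x\,q(G\setminus v)$, substitute the hypotheses, and observe that the resulting coefficient sequence differs from that of $q(G)$ only by a repetition of the coefficient $c$. The sole difference is that the paper simply quotes this identity as Proposition~40 of Arratia--Bollob\'as--Sorkin, whereas you re-derive it from the pivot recursion via a (sound) structural analysis of what happens to the twin $v'$ under edge local complementation.
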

\begin{proof}
By duplicating the vertex $v$, we obtain a graph $G'$ with interlace polynomial 
$q(G') = (1+x)q(G) - x \cdot q(G \setminus v)$~\cite[Prop.~40]{interlace}.
If the condition above is satisfied, $q(G') = x^{j+2} a(x) + c(x^{j+1} + x^j) + b(x)$.
The only difference between the coefficient sequences of $q(G)$ and $q(G')$ is that
the coefficient $c$ is repeated in $q(G')$, and unimodality or non-unimodality must therefore
be preserved.
\end{proof}

Let $G$ be the graph depicted in Fig.~\ref{fig:nonunimodal}, and let $v$ be one of 
the six vertices of degree one in this graph. If we duplicate $v$ we obtain a graph
whose interlace polynomial has the non-unimodal coefficient sequence $(2,7,6,7,6,4,3,2,1,0,0)$.
According to Prop.~\ref{prop:duplicate}, we can repeat the duplication of a vertex 
with degree one and the coefficient sequence will remain $(2,7,6,7,6,\ldots,6,4,3,2,1,0,0)$, 
i.e., non-unimodal. 

By the described extension methods we can obtain, from the single graph on 10 vertices
shown in Fig.~\ref{fig:nonunimodal}, all the 4 inequivalent graphs on 11 vertices
and 16 of the 20 inequivalent graphs on 12 vertices with non-unimodal interlace polynomials.
Representatives from the ELC orbits of the 4 non-trivial graphs on 12 vertices 
with non-unimodal interlace polynomials are shown in Fig.~\ref{fig:nonunimodal12}.

\begin{figure}
 \centering
 \includegraphics[height=.26\linewidth]{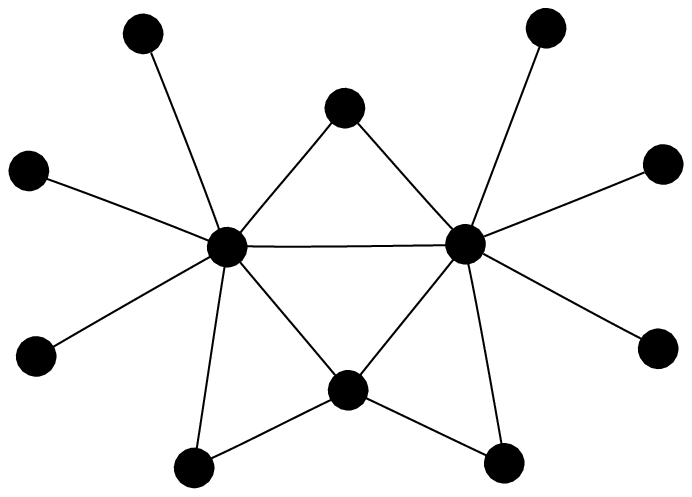}
 \quad
 \includegraphics[height=.26\linewidth]{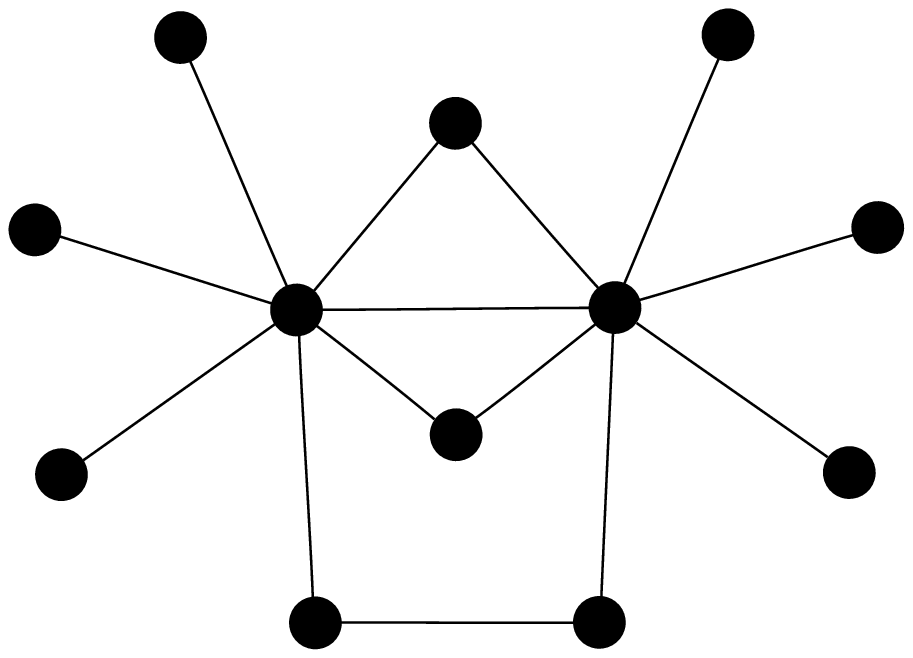}\\\vspace{10pt}
 \includegraphics[height=.24\linewidth]{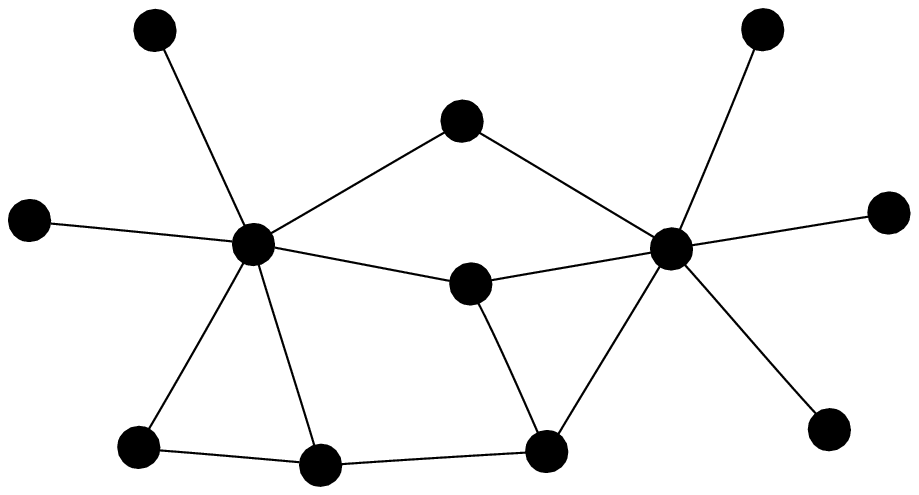}
 \quad
 \includegraphics[height=.24\linewidth]{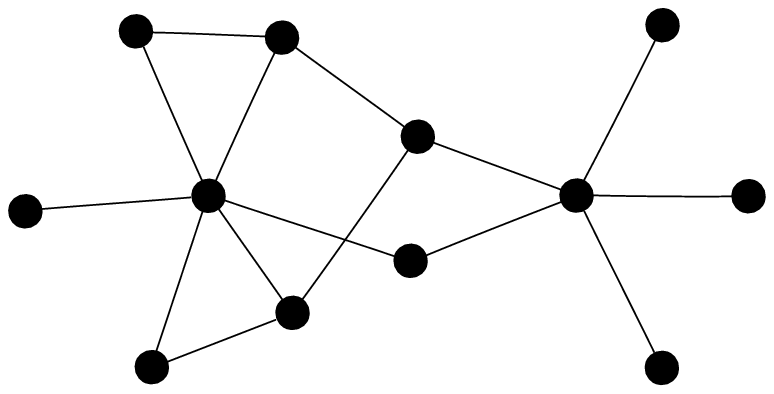}
 \caption{Non-trivial Graphs of Order 12 with Non-Unimodal Interlace Polynomial $q$}\label{fig:nonunimodal12}
\end{figure}

The two following conjectures have been checked for all graphs on up to 12 vertices,
and no counterexamples have been found.

\begin{con}[\hspace{1pt}\hspace{-1pt}\cite{interlace}]
For any interlace polynomial $q(G,x)$, the associated polynomial $x \cdot q(G,x+1)$ has a unimodal coefficient
sequence.
\end{con}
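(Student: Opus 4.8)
The plan is to first pass to a more transparent combinatorial model via the rank expansion of Aigner and van der Holst~\cite{aigner}. In that formulation,
\[
q(G,x) = \sum_{S \subseteq V} (x-1)^{\nu(G[S])},
\]
where $\nu(G[S])$ denotes the nullity over $\mathbb{F}_2$ of the adjacency matrix of the induced subgraph $G[S]$ (on $E_n$ this reads $\sum_S (x-1)^{|S|} = x^n$, and the two-term recursion of Definition~\ref{def:q} follows from the behaviour of principal ranks under pivoting). Substituting $x \mapsto x+1$ collapses the signs:
\[
x \cdot q(G,x+1) = \sum_{S \subseteq V} x^{\,\nu(G[S])+1} = \sum_{k \ge 0} N_k\, x^{k+1}, \qquad N_k := \#\{S \subseteq V : \nu(G[S]) = k\}.
\]
Since multiplication by $x$ only prepends a zero, the conjecture is \emph{equivalent} to the assertion that the sequence $(N_0, N_1, N_2, \ldots)$ --- the number of induced subgraphs of $G$ whose adjacency matrix has $\mathbb{F}_2$-nullity exactly $k$ --- is unimodal. (Here $N_0 = q(G,1)$ recovers the count of induced subgraphs with an odd number of perfect matchings.) This reformulation already explains why the shift is essential: in the $x$-variable the coefficients are genuine nonnegative counts, whereas $q$ itself mixes signs and need not be unimodal.

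Second, I would record the structural facts that make $(N_k)$ tractable. The key local property is that $\nu$ is \emph{$1$-Lipschitz} on the Boolean lattice: adjoining a single vertex borders the symmetric matrix $A[S]$ by one row and column, changing its rank by $0$, $1$, or $2$, so $\nu(S \cup \{v\}) - \nu(S) \in \{-1,0,+1\}$. Thus the level sets $L_k = \{S : \nu(G[S]) = k\}$ occupy consecutive ``layers'' of the hypercube, with edges only between adjacent layers. The easy extension operations are moreover already compatible with unimodality: scaling (clique substitution, $q(G')=2^m q(G)$) is harmless, and adjoining an isolated vertex multiplies $x\,q(G,x+1)$ by $(1+x)$, which preserves unimodality of a nonnegative sequence (the convolution of a unimodal sequence with $(1,1)$ is again unimodal). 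These observations settle nothing alone, but fix the framework for an induction.

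Third, I would attempt the induction through the defining recursion, written as $P(G,x) := q(G,x+1) = P(G\setminus u, x) + P(G^{(uv)}\setminus u, x)$. The bare statement cannot propagate, because the pointwise sum of two unimodal sequences is generally \emph{not} unimodal; the hypothesis must be strengthened. Two candidate strengthenings present themselves: an \emph{interlacing / peak-alignment} condition forcing the modes of the two summands to be compatible; or a $\Delta$-matroid reading of Bouchet's isotropic structure, in which one expects $N_k$ to count the vertex sets at symmetric-difference distance $k$ from the family of feasible sets $\{S : \det A[S] \neq 0\}$, so that unimodality would follow from the exchange structure via a suitable order-raising injection $L_k \hookrightarrow L_{k+1}$ below the mode.

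The hard part will be exactly this last step: controlling how the two summands of the recursion interact, or equivalently producing the order-raising injections between consecutive levels. The $1$-Lipschitz property is far too weak, since it constrains only how adjacent layers are linked and says nothing about their comparative sizes; the argument must instead exploit the finer linear-algebraic structure of principal ranks over $\mathbb{F}_2$, and it is precisely this structure that resists a clean inductive description. Real-rootedness --- which would yield log-concavity, and hence unimodality, for free --- is unavailable: it is invariant under the shift $x \mapsto x+1$, and the order-$10$ graph of Fig.~\ref{fig:nonunimodal} already has a non-unimodal, hence (its coefficients being nonnegative) non-real-rooted, polynomial $q$. For this reason I expect a complete proof to require either a genuinely new identity for the nullity distribution or a symmetric-chain-type decomposition of the cube adapted to $\nu$; the statement remaining an empirically verified conjecture (checked here to order~$12$) reflects that this obstacle has not yet been surmounted.
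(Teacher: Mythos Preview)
The statement is presented in the paper as a \emph{conjecture}, not a theorem: there is no proof to compare against. The paper's entire contribution on this point is the sentence preceding the conjecture, namely that it ``has been checked for all graphs on up to 12 vertices, and no counterexamples have been found.''

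Your proposal is honest about its status: it is a research plan, not a proof, and you explicitly say so in the final paragraph. The reformulation via the Aigner--van der Holst rank expansion is correct and useful --- the coefficients of $x\cdot q(G,x+1)$ really are the counts $N_k$ of induced subgraphs with $\mathbb{F}_2$-nullity $k$, and this is indeed why the shifted polynomial has nonnegative coefficients while $q$ itself can fail unimodality. The $1$-Lipschitz observation on $\nu$ is also correct. But, as you yourself concede, neither the recursion $P(G)=P(G\setminus u)+P(G^{(uv)}\setminus u)$ nor the Lipschitz property yields unimodality: sums of unimodal sequences need not be unimodal, and you have not supplied the ``interlacing / peak-alignment'' hypothesis or the order-raising injections that your outline calls for. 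The closing sentence essentially acknowledges that the proof is missing.

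So the situation is symmetric: the paper offers no argument beyond computation, and your proposal offers a sensible framework but no actual proof. If the intent was to prove the conjecture, the gap is exactly the one you name --- controlling the interaction of the two summands, or constructing the injections $L_k\hookrightarrow L_{k+1}$ --- and nothing in the proposal closes it.
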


\begin{con}
For any graph $G$, the interlace polynomial $Q(G)$ has a unimodal coefficient sequence.
\end{con}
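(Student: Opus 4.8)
Since both conjectures assert unimodality of a coefficient sequence that cannot be controlled termwise, the plan is to find a property that is strictly stronger than unimodality and stable under the operations used to build $Q(G)$. The obvious first attempt is induction on $n = |V|$ through the defining recursion $Q(G) = Q(G\setminus u) + Q(G^{(uv)}\setminus u) + Q(G*u\setminus u)$: each of the three graphs on the right has order $n-1$, each summand has nonnegative coefficients, and by the inductive hypothesis each is unimodal. The difficulty is immediate and, I expect, decisive: a sum of unimodal sequences need not be unimodal, so the induction does not close. The companion polynomial $q$, which obeys the analogous two-term recursion $q(G) = q(G\setminus u) + q(G^{(uv)}\setminus u)$, is exactly where this failure surfaces, as the order-$10$ counterexample shows. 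Any workable proof must therefore carry a stronger, sum-stable invariant through the recursion.

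The cleanest candidate is real-rootedness. A polynomial with nonnegative coefficients and only real roots is log-concave, hence unimodal as long as it has no internal zero coefficients; so I would first conjecture that $Q(G,x)$ has only real (nonpositive) roots and try to prove this by induction. Real-rootedness is not preserved under arbitrary sums, but it is preserved under sums of polynomials admitting a common interlacing, which is the mechanism behind interlacing-families arguments. The proof would then reduce to establishing an interlacing relationship among $Q(G\setminus u)$, $Q(G^{(uv)}\setminus u)$, and $Q(G*u\setminus u)$. The heuristic is that the extra third term, the $G*u$ term that distinguishes $Q$ from the non-unimodal $q$ by recording the full LC orbit rather than only the ELC orbit, is precisely what should restore the real-rootedness that $q$ lacks.

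A more structural alternative is to use the description of $Q(G)$ as a rank generating function of modified adjacency matrices over $\mathbb{F}_2$, already noted earlier in this paper. Through Bouchet's theory this places $Q$ and $q$ among the invariants of isotropic systems and delta-matroids, where the coefficient sequences are rank- or corank-enumerators. This is attractive because the recent log-concavity theorems for matroids and their relatives are proved at exactly this level of generality; the task would be to realize the coefficient sequence of $Q(G)$, or of $x \cdot q(G,x+1)$ for the companion conjecture, as such an enumerator and then invoke the corresponding log-concavity result. One would still have to check that the sequence has no internal zeros before passing from log-concavity to unimodality.

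I expect the interlacing step to be the genuine bottleneck. There is no canonical ordering of the three reduced graphs that visibly yields a common interlacer, and the LC and ELC operations can change both the degree of the polynomial (the independence number of the orbit) and its overall shape, so the interlacing relations would have to hold uniformly over all edges $\{u,v\}$ and all $G$. I would therefore first test real-rootedness computationally against the already-classified orbits up to order $12$: if even a single $Q(G)$ has a non-real root, the real-rootedness route is dead and one must fall back on a more delicate ``synchronized peaks'' argument or on the delta-matroid formulation, and only if the test is passed would I commit to the interlacing induction.
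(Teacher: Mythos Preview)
The statement you were asked to prove is labelled a \emph{Conjecture} in the paper, not a theorem or proposition, and the paper offers no proof of it. Its entire support is the sentence that precedes it: the conjecture ``has been checked for all graphs on up to 12 vertices, and no counterexamples have been found.'' There is thus no argument in the paper for your proposal to be compared against.

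Your write-up is, by your own framing, not a proof either: it is a research plan. You correctly isolate why the obvious induction on the three-term recursion cannot close (a sum of unimodal sequences need not be unimodal, and indeed this is exactly where $q$ breaks at $n=10$), and you propose two strengthenings that would be sum-stable: real-rootedness carried through the recursion via a common-interlacer argument for $Q(G\setminus u)$, $Q(G^{(uv)}\setminus u)$, $Q(G*u\setminus u)$; or a reformulation of the coefficient sequence as a rank enumerator in the delta-matroid/isotropic-system setting so as to import a log-concavity theorem. Both are plausible lines of attack, but neither is executed, and you flag the bottlenecks yourself (no visible common interlacer; real-rootedness is itself an unverified hypothesis; log-concavity would still require ruling out internal zeros). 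So the honest summary is that the paper leaves the statement open and supported only by computation to $n\le 12$, and your submission is a sensible outline of how one might try to attack it rather than a proof that can be checked.
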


\section{Connections to Codes and Quantum States}\label{sec:codes}

An important question is what the interlace polynomials $q(G)$ and $Q(G)$ actually compute
about the graph $G$ itself. When $G$ is a circle graph, $q(G)$ can be used to solve counting problems
relevant to DNA sequencing~\cite{eulerdna}. We will show that the interlace polynomials also give
clues about the error-correction capability of codes and the entanglement of quantum states.

It is known that \emph{self-dual quantum codes}, so called because they
correspond to self-dual additive codes over $\mathbb{F}_4$~\cite{calderbank}, 
can be represented as graphs~\cite{schlingemann,grasslpaper}.
The LC orbit of a graph corresponds to the equivalence class of a 
self-dual quantum code~\cite{nest}.
Similarly, the ELC orbits of bipartite graphs correspond to 
equivalence classes of \emph{binary linear codes}~\cite{pivotlinear}.
In both cases the \emph{minimum distance}, an important parameter that determines the error-correcting
capability of a code, is given by $\delta + 1$, where $\delta$ is the minimum vertex degree over 
all graphs in the corresponding LC or ELC orbit. A self-dual quantum code can also be interpreted
as a \emph{quantum graph state}~\cite{hein}, and the $\delta$-value of the associated LC orbit is then 
an indicator of the degree of \emph{entanglement} in the state.

Although the value $\delta$ can not be obtained from an interlace polynomial, several values that
are correlated with $\delta$ are encoded in the interlace polynomial.
The size of the largest independent set over 
all members of the LC orbit of $G$ equals $\deg(Q)$, the degree of $Q(G)$~\cite{aigner,setapaper}.
We have previously shown that optimal self-dual quantum codes
correspond to LC orbits where $\deg(Q)$ is small~\cite{setapaper}.
These codes have largest possible minimum distance for a given length $n$, and thus
the associated LC orbits of graphs on $n$ vertices have maximum possible values of $\delta$.
The data in Table~\ref{tab:degQ} implies that the LC orbits with the highest $\delta$-values also have 
the lowest values of $\deg(Q)$, but that the converse is not always true.
In the context of quantum graph states, the value $2^{\deg(Q)}$ is equal 
to the \emph{peak-to-average power ratio}~\cite{setapaper} with respect to certain transforms, 
which is another indicator of the degree of entanglement in the state.

\begin{table}
\centering
\caption{Range of $\deg(Q)$ For Given $\delta$ and $n$}
\label{tab:degQ}
\begin{tabular}{cccccccccccccc}
\toprule
$\delta \backslash n$ & 2 & 3 & 4 &  5      & 6      & 7     & 8      & 9      & 10      & 11    & 12 \\ 
\midrule
1 & 1 & 2 & 2,3 & 3,4    & 3--5   & 3--6  & 3--7   & 4--8   & 4--9    & 4--10 & 4--11\\
2 &   &   &     & 2      & 3      & 3,4   & 3,4    & 3--5   & 4--6    & 4--7  & 4--8 \\
3 &   &   &     &        & 2      &       & 3,4    & 3,4    & 3--5    & 4--6  & 4--7 \\
4 &   &   &     &        &        &       &        &        &         & 4     & 4  \\
5 &   &   &     &        &        &       &        &        &         &       & 4  \\
\bottomrule
\end{tabular}
\end{table}

Another measure of the entanglement in a quantum graph state is the 
\emph{Clifford merit factor} (CMF)~\cite{cmf}.
A quantum graph state can be represented as a graph $G$, and the CMF of the state can be derived 
from the value obtained by evaluating the associated interlace 
polynomial $Q(G)$ at $x=4$~\cite{interlacespectral}.
The CMF value can be obtained with the formula $\frac{6^n}{2^n Q(G,4) - 6^n}$.
Interestingly, $\frac{Q(G,4)}{2^n}$ is also the number of induced Eulerian subgraphs of a graph 
on $n$ vertices~\cite{aigner}, which is invariant over the LC orbit.
As can be seen in Table~\ref{tab:Q4}, the LC orbits with the highest $\delta$-values also have 
the lowest values of $Q(G,4)$.
Other evaluations of the interlace polynomials are also of interest in the context of quantum graph
states, for instance $q(G,1)$ and $Q(G,2)$ give the number of \emph{flat spectra} with respect to certain
sets of transforms of the state~\cite{interlacespectral}.

\begin{table}
\centering
\caption{Range of $\frac{Q(G,4)}{2^n}$ For Given $n$ and $\delta$}
\label{tab:Q4}
\begin{tabular}{cccccc}
\toprule
$n \backslash \delta$ & 1 & 2 & 3 & 4 & 5 \\ 
\midrule
2  & 3         &          &          &          &    \\
3  & 5         &          &          &          &    \\
4  & 8--9      &          &          &          &    \\
5  & 13--17    & 12       &          &          &    \\
6  & 20--33    & 19       & 18       &          &    \\
7  & 30--65    & 29--30   &          &          &    \\
8  & 47--129   & 45--48   & 44--45   &          &    \\
9  & 73--257   & 69--80   & 68--69   &          &    \\
10 & 112--513  & 106--128 & 104--109 &          &    \\
11 & 172--1025 & 160--183 & 157--180 & 156      &    \\
12 & 260--2049 & 244--362 & 237--288 & 238--239 & 234 \\
\bottomrule
\end{tabular}
\end{table}

Although no algorithm is known for computing the interlace polynomial of a graph efficiently,
it is in general faster to generate interlace polynomials, by simply using the recursive
algorithm given in Definitions~\ref{def:q} and~\ref{def:Q}, than it is to generate the entire 
ELC or LC orbits of a graph.
Note that calculating $\delta$ can also be done faster than by generating the complete LC orbit, 
by using methods for calculating the minimum distance of a self-dual quantum code represented
as a graph~\cite{selfdual}.
We have calculated the interlace polynomials $Q$ of graphs corresponding to the best known self-dual
quantum codes, obtained from \url{http://www.codetables.de/} and from a search we have previously performed
of \emph{circulant graph codes}~\cite{nonbinary}.
An adjacency matrix is called \emph{circulant} if the $i$-th row is equal to the
first row, cyclically shifted $i-1$ times to the right.
The results, for graphs of order $n$ up to 25, are given in Table~\ref{tab:bounds}.
Values printed in bold font are the best values we have found, and are thus upper bounds
on the minimum possible values of $\deg(Q)$ and $Q(G,4)$ for the given $n$.
The values of $\delta$ printed in bold font are known to be optimal, except for $n=23$ and $n=25$, where a graph with
$\delta=8$ could exist, and $n=24$, $n=26$, and $n=27$, where $\delta=9$ is possible. In general,
the following bounds hold~\cite{calderbank}.
\[
\delta \le 2 \left\lfloor \frac{n}{6} \right\rfloor + 1,
\]
if the corresponding self-dual quantum code is of \emph{Type~II}, which means that its graph
representation is \emph{anti-Eulerian}~\cite{selfdual}, i.e., a graph where all vertices have odd degree. Such
graphs must have an even number of vertices, and it is interesting to note that the anti-Eulerian
property is preserved by LC operations.
\[
\delta \le \begin{cases}
2 \left\lfloor \frac{n}{6} \right\rfloor, \quad \text{if } n \equiv 0 \text{ } (\text{mod } 6) \\ 
2 \left\lfloor \frac{n}{6} \right\rfloor + 2, \quad \text{if } n \equiv 5 \text{ } (\text{mod } 6) \\ 
2 \left\lfloor \frac{n}{6} \right\rfloor + 1, \quad \text{otherwise,}
\end{cases}
\]
if the corresponding self-dual quantum code is of \emph{Type~I}, i.e., corresponds to a graph where 
at least one vertex has even degree.
Table~\ref{tab:bounds} also lists the first row of those adjacency matrices that are circulant. The remaining
adjacency matrices are as follows.

\[
\Gamma_{13,1} =
\left(\begin{array}{ccccccccccccc}
0 & 1 & 0 & 0 & 0 & 0 & 0 & 0 & 0 & 1 & 1 & 1 & 1 \\
1 & 0 & 1 & 0 & 0 & 0 & 0 & 1 & 0 & 0 & 0 & 1 & 1 \\
0 & 1 & 0 & 1 & 0 & 1 & 0 & 0 & 0 & 0 & 0 & 1 & 1 \\
0 & 0 & 1 & 0 & 0 & 0 & 1 & 0 & 0 & 0 & 1 & 1 & 1 \\
0 & 0 & 0 & 0 & 0 & 1 & 0 & 1 & 0 & 0 & 1 & 1 & 1 \\
0 & 0 & 1 & 0 & 1 & 0 & 0 & 0 & 0 & 1 & 1 & 0 & 1 \\
0 & 0 & 0 & 1 & 0 & 0 & 0 & 0 & 1 & 1 & 1 & 0 & 1 \\
0 & 1 & 0 & 0 & 1 & 0 & 0 & 0 & 1 & 0 & 1 & 1 & 0 \\
0 & 0 & 0 & 0 & 0 & 0 & 1 & 1 & 0 & 1 & 1 & 1 & 0 \\
1 & 0 & 0 & 0 & 0 & 1 & 1 & 0 & 1 & 0 & 0 & 1 & 0 \\
1 & 0 & 0 & 1 & 1 & 1 & 1 & 1 & 1 & 0 & 0 & 0 & 0 \\
1 & 1 & 1 & 1 & 1 & 0 & 0 & 1 & 1 & 1 & 0 & 0 & 0 \\
1 & 1 & 1 & 1 & 1 & 1 & 1 & 0 & 0 & 0 & 0 & 0 & 0
\end{array}\right)
\]
\[
\Gamma_{13,2} =
\left(\begin{array}{ccccccccccccc}
0 & 1 & 1 & 0 & 0 & 0 & 0 & 0 & 1 & 0 & 0 & 0 & 1 \\
1 & 0 & 0 & 0 & 1 & 0 & 1 & 0 & 0 & 0 & 0 & 1 & 1 \\
1 & 0 & 0 & 1 & 0 & 0 & 0 & 1 & 0 & 0 & 1 & 0 & 0 \\
0 & 0 & 1 & 0 & 0 & 0 & 0 & 0 & 0 & 1 & 1 & 1 & 0 \\
0 & 1 & 0 & 0 & 0 & 1 & 0 & 0 & 0 & 0 & 1 & 1 & 1 \\
0 & 0 & 0 & 0 & 1 & 0 & 1 & 1 & 0 & 1 & 0 & 1 & 0 \\
0 & 1 & 0 & 0 & 0 & 1 & 0 & 0 & 1 & 1 & 0 & 1 & 0 \\
0 & 0 & 1 & 0 & 0 & 1 & 0 & 0 & 1 & 1 & 1 & 0 & 1 \\
1 & 0 & 0 & 0 & 0 & 0 & 1 & 1 & 0 & 1 & 1 & 1 & 1 \\
0 & 0 & 0 & 1 & 0 & 1 & 1 & 1 & 1 & 0 & 1 & 0 & 0 \\
0 & 0 & 1 & 1 & 1 & 0 & 0 & 1 & 1 & 1 & 0 & 1 & 1 \\
0 & 1 & 0 & 1 & 1 & 1 & 1 & 0 & 1 & 0 & 1 & 0 & 1 \\
1 & 1 & 0 & 0 & 1 & 0 & 0 & 1 & 1 & 0 & 1 & 1 & 0
\end{array}\right)
\]
\[
\Gamma_{18} =
\left(\begin{array}{cccccccccccccccccc}
0 & 0 & 1 & 0 & 0 & 0 & 0 & 0 & 1 & 1 & 0 & 0 & 0 & 1 & 0 & 1 & 1 & 1 \\
0 & 0 & 0 & 1 & 1 & 0 & 0 & 0 & 0 & 1 & 0 & 1 & 0 & 0 & 0 & 1 & 1 & 1 \\
1 & 0 & 0 & 0 & 0 & 1 & 0 & 0 & 1 & 1 & 1 & 1 & 0 & 1 & 0 & 0 & 0 & 0 \\
0 & 1 & 0 & 0 & 0 & 0 & 0 & 0 & 1 & 0 & 1 & 0 & 0 & 0 & 1 & 1 & 1 & 1 \\
0 & 1 & 0 & 0 & 0 & 1 & 1 & 1 & 0 & 0 & 0 & 1 & 1 & 0 & 1 & 0 & 0 & 0 \\
0 & 0 & 1 & 0 & 1 & 0 & 0 & 1 & 0 & 0 & 0 & 1 & 0 & 1 & 0 & 1 & 0 & 1 \\
0 & 0 & 0 & 0 & 1 & 0 & 0 & 1 & 1 & 0 & 1 & 0 & 1 & 0 & 1 & 0 & 1 & 0 \\
0 & 0 & 0 & 0 & 1 & 1 & 1 & 0 & 0 & 1 & 1 & 0 & 0 & 1 & 1 & 0 & 0 & 0 \\
1 & 0 & 1 & 1 & 0 & 0 & 1 & 0 & 0 & 0 & 1 & 0 & 1 & 0 & 0 & 0 & 0 & 1 \\
1 & 1 & 1 & 0 & 0 & 0 & 0 & 1 & 0 & 0 & 0 & 1 & 1 & 0 & 0 & 0 & 0 & 1 \\
0 & 0 & 1 & 1 & 0 & 0 & 1 & 1 & 1 & 0 & 0 & 0 & 0 & 1 & 0 & 1 & 0 & 0 \\
0 & 1 & 1 & 0 & 1 & 1 & 0 & 0 & 0 & 1 & 0 & 0 & 1 & 0 & 1 & 1 & 1 & 0 \\
0 & 0 & 0 & 0 & 1 & 0 & 1 & 0 & 1 & 1 & 0 & 1 & 0 & 1 & 0 & 1 & 0 & 0 \\
1 & 0 & 1 & 0 & 0 & 1 & 0 & 1 & 0 & 0 & 1 & 0 & 1 & 0 & 1 & 0 & 0 & 0 \\
0 & 0 & 0 & 1 & 1 & 0 & 1 & 1 & 0 & 0 & 0 & 1 & 0 & 1 & 0 & 0 & 1 & 0 \\
1 & 1 & 0 & 1 & 0 & 1 & 0 & 0 & 0 & 0 & 1 & 1 & 1 & 0 & 0 & 0 & 0 & 0 \\
1 & 1 & 0 & 1 & 0 & 0 & 1 & 0 & 0 & 0 & 0 & 1 & 0 & 0 & 1 & 0 & 0 & 1 \\
1 & 1 & 0 & 1 & 0 & 1 & 0 & 0 & 1 & 1 & 0 & 0 & 0 & 0 & 0 & 0 & 1 & 0
\end{array}\right)
\]
\[
\Gamma_{21} =
\left(\begin{array}{ccccccccccccccccccccc}
0 & 0 & 0 & 0 & 0 & 1 & 0 & 0 & 0 & 1 & 1 & 0 & 1 & 0 & 1 & 0 & 1 & 1 & 1 & 0 & 1 \\
0 & 0 & 1 & 1 & 0 & 0 & 1 & 1 & 0 & 1 & 1 & 0 & 0 & 0 & 1 & 1 & 0 & 0 & 0 & 0 & 0 \\
0 & 1 & 0 & 0 & 1 & 1 & 0 & 0 & 0 & 1 & 0 & 1 & 0 & 0 & 0 & 0 & 0 & 1 & 1 & 1 & 0 \\
0 & 1 & 0 & 0 & 1 & 0 & 1 & 1 & 0 & 0 & 0 & 1 & 1 & 0 & 0 & 0 & 1 & 0 & 1 & 0 & 0 \\
0 & 0 & 1 & 1 & 0 & 1 & 0 & 0 & 0 & 1 & 0 & 0 & 0 & 1 & 1 & 0 & 0 & 0 & 1 & 1 & 0 \\
1 & 0 & 1 & 0 & 1 & 0 & 1 & 1 & 1 & 0 & 0 & 0 & 0 & 1 & 0 & 0 & 0 & 1 & 0 & 1 & 0 \\
0 & 1 & 0 & 1 & 0 & 1 & 0 & 1 & 0 & 1 & 1 & 0 & 1 & 0 & 0 & 0 & 0 & 0 & 1 & 0 & 1 \\
0 & 1 & 0 & 1 & 0 & 1 & 1 & 0 & 0 & 0 & 0 & 0 & 0 & 1 & 0 & 0 & 0 & 1 & 0 & 1 & 1 \\
0 & 0 & 0 & 0 & 0 & 1 & 0 & 0 & 0 & 1 & 1 & 1 & 1 & 1 & 0 & 1 & 0 & 0 & 1 & 0 & 1 \\
1 & 1 & 1 & 0 & 1 & 0 & 1 & 0 & 1 & 0 & 0 & 0 & 0 & 0 & 1 & 1 & 0 & 0 & 0 & 1 & 0 \\
1 & 1 & 0 & 0 & 0 & 0 & 1 & 0 & 1 & 0 & 0 & 0 & 1 & 1 & 0 & 0 & 1 & 0 & 0 & 0 & 1 \\
0 & 0 & 1 & 1 & 0 & 0 & 0 & 0 & 1 & 0 & 0 & 0 & 0 & 1 & 0 & 1 & 1 & 0 & 1 & 0 & 1 \\
1 & 0 & 0 & 1 & 0 & 0 & 1 & 0 & 1 & 0 & 1 & 0 & 0 & 0 & 0 & 1 & 0 & 1 & 0 & 0 & 1 \\
0 & 0 & 0 & 0 & 1 & 1 & 0 & 1 & 1 & 0 & 1 & 1 & 0 & 0 & 0 & 1 & 0 & 1 & 0 & 0 & 0 \\
1 & 1 & 0 & 0 & 1 & 0 & 0 & 0 & 0 & 1 & 0 & 0 & 0 & 0 & 0 & 1 & 1 & 1 & 0 & 0 & 1 \\
0 & 1 & 0 & 0 & 0 & 0 & 0 & 0 & 1 & 1 & 0 & 1 & 1 & 1 & 1 & 0 & 0 & 0 & 0 & 1 & 0 \\
1 & 0 & 0 & 1 & 0 & 0 & 0 & 0 & 0 & 0 & 1 & 1 & 0 & 0 & 1 & 0 & 0 & 1 & 1 & 1 & 0 \\
1 & 0 & 1 & 0 & 0 & 1 & 0 & 1 & 0 & 0 & 0 & 0 & 1 & 1 & 1 & 0 & 1 & 0 & 0 & 0 & 0 \\
1 & 0 & 1 & 1 & 1 & 0 & 1 & 0 & 1 & 0 & 0 & 1 & 0 & 0 & 0 & 0 & 1 & 0 & 0 & 1 & 0 \\
0 & 0 & 1 & 0 & 1 & 1 & 0 & 1 & 0 & 1 & 0 & 0 & 0 & 0 & 0 & 1 & 1 & 0 & 1 & 0 & 0 \\
1 & 0 & 0 & 0 & 0 & 0 & 1 & 1 & 1 & 0 & 1 & 1 & 1 & 0 & 1 & 0 & 0 & 0 & 0 & 0 & 0
\end{array}\right)
\]

For $n=13$ and $n=14$ we were able to compute the interlace polynomial $Q$ of all graphs with optimal $\delta$,
since the corresponding codes have been classified~\cite{selfdual,zlatko}. For other $n$, codes with the same
$\delta$ but with lower $\deg(Q)$ or $Q(G,4)$ may exist.
The best self-dual quantum codes correspond to LC orbits where $\delta$ is maximized, and our results
for graphs on up to $12$ vertices suggested that these LC orbits also minimize $\deg(Q)$ and $Q(G,4)$.
However, in Table~\ref{tab:bounds} we find several examples where the graph we have found
with lowest $\deg(Q)$ does not have maximum $\delta$. 
We have not found a single example where the lowest $Q(G,4)$ is found in a graph
with suboptimal $\delta$, which indicates that $Q(G,4)$ may be a better indicator of the minimum distance of a code
that $\deg(Q)$, and leads to the following conjecture.

\begin{table}
\centering
\caption{Best Found Values of $\delta$, $\deg(Q)$, and $\frac{Q(G,4)}{2^n}$}
\label{tab:bounds}
\begin{tabular}{cccrl}
\toprule
$n$ & $\delta$ & $\deg(Q)$ & $\frac{Q(G,4)}{2^n}$ & Adjacency matrix \\ 
\midrule
13  & $\boldsymbol{4}$         & $\boldsymbol{4}$        & 361                   & $\Gamma_{13,1}$ \\
 13  & $\boldsymbol{4}$         & 5         & $\boldsymbol{360}$                 & $\Gamma_{13,2}$ \\
14  & $\boldsymbol{5}$         & $\boldsymbol{4}$        & $\boldsymbol{549}$    & (00001011101000) \\
15  & $\boldsymbol{5}$         & 6        & $\boldsymbol{830}$                   & (001110011001110) \\ 
 15  & 4         & $\boldsymbol{5}$        & 833                                 & (001111011011110) \\
16  & $\boldsymbol{5}$         & $\boldsymbol{5}$        & $\boldsymbol{1264}$   & (0010101101101010) \\
17  & $\boldsymbol{6}$         & 6        & $\boldsymbol{1872}$                  & (00100011111100010) \\
 17  & 5         & $\boldsymbol{5}$        & 1906                                & (00000111001110000) \\
18  & $\boldsymbol{7}$         & 6        & $\boldsymbol{2808}$                  & $\Gamma_{18}$ \\
 18  & 5         & $\boldsymbol{5}$        & 2835                                & (001001111111110010) \\
19  & $\boldsymbol{6}$         & $\boldsymbol{6}$        & $\boldsymbol{4296}$   & (0000101001100101000) \\
20  & $\boldsymbol{7}$         & $\boldsymbol{6}$        & $\boldsymbol{6444}$   & (00000100111110010000) \\
21  & $\boldsymbol{7}$         & 9        & $\boldsymbol{9672}$                  & $\Gamma_{21}$ \\
 21  & 6         & $\boldsymbol{6}$        & 9756                                & (000001100100100110000) \\
22  & $\boldsymbol{7}$         & $\boldsymbol{6}$        & $\boldsymbol{14,688}$ & (0000001001111100100000) \\
23  & $\boldsymbol{7}$         & 7        & $\boldsymbol{22,013}$                & (00000011101111011100000) \\
 23  & 6         & $\boldsymbol{5}$        & $22,036$                              & (00000111110110111110000) \\
24  & $\boldsymbol{7}$         & $\boldsymbol{6}$        & $\boldsymbol{33,156}$ & (001001110100100101110010) \\
25  & $\boldsymbol{7}$         & 7        & $\boldsymbol{49,812}$                & (0001100001111111100001100) \\
25  & $\boldsymbol{7}$         & $\boldsymbol{6}$        & $49,862$              & (0000011111001100111110000) \\
\bottomrule
\end{tabular}
\end{table}

\begin{con}
Let $G$ be a graph on $n$ vertices, and let $\delta$ be the 
minimum vertex degree over all graphs in the LC orbit of $G$.
If there exists no other graph $G'$ on $n$ vertices 
such that $Q(G',4) < Q(G,4)$, then there exists no other graph on $n$ vertices where the 
minimum vertex degree over all graphs in the LC orbit is greater than $\delta$.
\end{con}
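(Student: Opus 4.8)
The plan is to recast both invariants in the statement in the language of the self-dual additive $\mathbb{F}_4$-code associated with the LC orbit, where weight-enumerator tools are available. First I would use the identity $Q(G,4)/2^n = \mathcal{E}(G)$, the number of induced Eulerian subgraphs, already recorded in the excerpt. Writing $\Gamma$ for the adjacency matrix over $\mathbb{F}_2$, the induced subgraph $G[W]$ is Eulerian precisely when $\mathbf{1}_W \in \ker \Gamma[W]$, i.e. $(\Gamma\chi_W)_v = 0$ for every $v \in W$. In the graph-state picture the codeword indexed by $a = \chi_W$ carries the Pauli $X^a Z^{\Gamma a}$, so the Eulerian condition says exactly that this codeword has no coordinate of ``$Y$''-type (no position with $a_v = (\Gamma a)_v = 1$). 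Hence $\mathcal{E}(G) = \mathrm{cwe}_C(1,1,1,0)$, the number of codewords avoiding one fixed nonzero symbol of $\mathbb{F}_4$ in every position; by the global symmetry among the three nonzero symbols the excluded symbol is immaterial, and this count is an LC-invariant, consistent with the same property of $Q(G,4)$. On the other side, $\delta + 1 = d$ is the minimum distance, $d = \min_{a \neq 0} |\mathrm{supp}(a) \cup \mathrm{supp}(\Gamma a)|$. The target then becomes: a code minimising the partial enumerator $\mathrm{cwe}_C(1,1,1,0)$ among all length-$n$ self-dual additive $\mathbb{F}_4$-codes must have the largest possible minimum distance.

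With both quantities expressed through the complete weight enumerator, I would attack the implication by bounding $\mathrm{cwe}_C(1,1,1,0)$ from below in terms of $d$ and $n$. The natural machinery is the invariant theory of complete weight enumerators of self-dual codes under the (real) Clifford group (Nebe--Rains--Sloane), which confines every admissible enumerator to an explicit finite-dimensional space, together with the quantum Delsarte/linear-programming bounds (Rains) that any code of minimum distance $\geq d$ must respect. Combining these, one hopes to extract a lower bound $\mathcal{E}(G) \geq f(n,d)$ and, in parallel, to exhibit the optimal code (of maximal $d$) realising a value that undercuts $f(n,d)$ for every smaller $d$. As an elementary base, the empty set and the $n$ singletons are always Eulerian, so $\mathcal{E}(G)\ge n+1$, with finer contributions read off the low-weight codewords; intuitively, forcing $d$ up makes the orbit dense and should suppress these contributions to $\mathrm{cwe}_C(1,1,1,0)$.

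The hard part, and the reason the statement is only conjectured, is that the correspondence between $Q(G,4)$ and $\delta$ is \emph{not} monotone: the ranges in Table~\ref{tab:Q4} overlap across successive values of $\delta$, and a graph of smaller $\delta$ can have a smaller $Q(G,4)$ than some graph of larger $\delta$ (for $n=12$, $\delta=3$ attains $237$ while $\delta=4$ attains only $238$). Thus no monotone inequality ``larger $\delta$ implies smaller $Q(G,4)$'' can hold, and the proof must instead show that the single \emph{global} minimiser of $Q(\cdot,4)$ lands on the \emph{global} maximiser of $\delta$. This demands an enumerator bound tight exactly at the global optimum, whereas LP bounds are generically slack, and it would also require control of the extremal self-dual codes for every $n$ — precisely the data the excerpt reports as unavailable beyond small lengths (optimal $\delta$ is itself open for several $n$, and enumerators are computed only for $n \leq 14$).

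As a fallback I would keep an inductive route in reserve: the recursion of Definition~\ref{def:Q}, together with the vertex-duplication and clique-substitution operations of Section~\ref{sec:nonuni}, relates $Q(\cdot,4)$ of a graph to that of smaller graphs, so one could try to transfer a minimiser on $n-1$ vertices to one on $n$ while tracking $\delta$. I expect this to founder on the same obstacle, since the minimisers of $Q(\cdot,4)$ at the relevant lengths are the sporadic optimal quantum codes (such as the hexacode at $n=6$ and the circulant examples of Table~\ref{tab:bounds}), which are not assembled from smaller graphs by these local moves. The genuinely new ingredient required therefore appears to be a sharp, Clifford-invariant lower bound on $\mathrm{cwe}_C(1,1,1,0)$ keyed precisely to the minimum distance.
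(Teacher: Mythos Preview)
The statement you are addressing is a \emph{Conjecture} in the paper, not a theorem, and the paper supplies no proof whatsoever. Its only support is the exhaustive computational evidence for $n \le 12$ in Tables~\ref{tab:degQ} and~\ref{tab:Q4} and the best-known-code data in Table~\ref{tab:bounds}; the surrounding text explicitly says that no counterexample was found and frames the statement as a conjecture precisely because a proof is lacking.

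Your proposal is not a proof either, and you are candid about this: you outline a coding-theoretic reformulation (identifying $Q(G,4)/2^n$ with the count of codewords avoiding a fixed nonzero $\mathbb{F}_4$-symbol, and $\delta+1$ with the minimum distance of the associated self-dual additive code), sketch the natural LP/invariant-theory attack, and then correctly diagnose why it stalls. Your key observation---that the ranges in Table~\ref{tab:Q4} overlap across consecutive $\delta$-values (e.g.\ at $n=12$ some $\delta=3$ graphs achieve $237$ while all $\delta=4$ graphs sit at $238$--$239$), so no monotone inequality ``larger $\delta$ $\Rightarrow$ smaller $Q(\cdot,4)$'' can hold---is exactly the obstruction, and it shows that any argument must isolate the \emph{global} extremum rather than compare adjacent strata.

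In short, there is no ``paper's own proof'' to compare against. What you have written is a reasonable research plan with honestly acknowledged gaps, not a proof; the conjecture remains open, and your analysis of why is sound.
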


Note that once we have found a graph $G$ on $n$ vertices with a certain $\deg(Q(G))$, 
we can obtain a graph $G'$ on $n-1$ vertices with $\deg(Q(G')) = \deg(Q(G))$ or
$\deg(Q(G')) = \deg(Q(G))-1$ by simply 
deleting any vertex of $G$. This process is equivalent to \emph{shortening} a quantum code~\cite{gaborit},
and it is known that if the minimum vertex degree in the LC orbit of $G$ is $\delta$, then
the minimum vertex degree in the LC orbit of $G'$ is $\delta$ or $\delta-1$.

The following theorem gives an upper bound on $Q(G,4)$ for a graph $G$ with a given value of
$\delta$. Note that the proof relies on certain properties of the 
\emph{aperiodic propagation criteria}~\cite{apc} for Boolean functions, which will not be defined here.

\begin{thm}
\[
Q(G,4) \le \frac{\gamma(\delta+1) + 6^n}{2^n},
\]
where
\[
\gamma(d) = \sum_{t=0}^n {\binom{n}{t}}2^t
\left ( \sum_{k = \max(1,d+t-n)}^t {\binom{t}{k}}2^{n-k} \right ).
\]
\end{thm}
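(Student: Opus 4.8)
The plan is to use the spectral interpretation of $Q(G,4)$ supplied by the APC machinery to rewrite the quantity $2^n Q(G,4) - 6^n$ as a sum of autocorrelation magnitudes, and then to bound that sum termwise. First I would exploit invariance: both $Q(G,4)$ and $\delta$ are constant on the LC orbit of $G$, so I may assume $G$ itself realises the minimum degree $\delta$. Clearing denominators, the claim becomes $2^n Q(G,4) - 6^n \le \gamma(\delta+1)$. To the graph $G$ I associate its quadratic Boolean function $p_G(x) = \sum_{\{i,j\}\in E} x_i x_j$ and the sign function $s(x)=(-1)^{p_G(x)}$, and I consider the \emph{generalized} autocorrelation coefficients of $s$, one family for each assignment of a correlation type (periodic, aperiodic, or negaperiodic) to each of the $n$ coordinates. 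The first key step is the identity, coming from the spectral reading of the interlace polynomial, that $2^n Q(G,4)$ equals the total of the magnitudes of all of these coefficients, and that the zero-shift (``peak'') terms contribute exactly $6^n$ independently of $G$. As a sanity check, for the edgeless graph $E_n$ every coefficient attains its trivial maximum, and one computes directly that the peak total is $\sum_{t} \binom{n}{t} 2^t \cdot 2^n = 6^n$. Hence $2^n Q(G,4) - 6^n$ is exactly the off-peak autocorrelation mass.

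The second step is to discard the coefficients that the minimum distance forces to vanish. Under the correspondence between the LC orbit and a self-dual quantum code, the minimum distance of the code is $\delta+1$; I would invoke the cited APC results to conclude that every generalized autocorrelation coefficient whose \emph{total weight} is at most $\delta$ must be zero. In the parametrisation where a coefficient is specified by a set of $t$ non-identity coordinates (there are $\binom{n}{t}2^t$ such choices, the $2^t$ recording the aperiodic/negaperiodic alternative) together with a shift supported on $k$ of them, the total weight is $k + (n-t)$, so the surviving coefficients are exactly those with $k + (n-t) \ge \delta+1$, equivalently $k \ge \max(1,(\delta+1)+t-n)$. This reproduces the summation range appearing in $\gamma$.

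The third step is the termwise bound. A coefficient carrying a shift on $k$ coordinates is an overlap sum over a region of size $2^{n-k}$, so its magnitude is at most $2^{n-k}$, with equality for the trivial function. Summing this maximum over the $\binom{t}{k}$ shift supports, the $\binom{n}{t}2^t$ coordinate choices, and the admissible range of $k$ yields precisely
\[
\sum_{t=0}^n \binom{n}{t} 2^t \sum_{k=\max(1,(\delta+1)+t-n)}^t \binom{t}{k} 2^{n-k} = \gamma(\delta+1),
\]
which is the desired bound; at $\delta=0$ the inequality degenerates to an equality, recovering $\gamma(1)+6^n = 8^n = 2^n Q(E_n,4)$.

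The main obstacle lies entirely in the first two steps: making the spectral identity precise (that $2^n Q(G,4)$ is the total generalized-autocorrelation mass with peak $6^n$) and pinning down the exact vanishing criterion, namely that minimum distance $\delta+1$ annihilates every coefficient of total weight at most $\delta$ and that ``total weight'' is the quantity $k+(n-t)$. Both rely on the APC theory that the statement defers to references. Once they are in place the final counting step is routine and essentially forced, since the maximal admissible contributions are assembled by elementary binomial bookkeeping and are simultaneously attained by the edgeless graph, confirming that the bound cannot be improved without further hypotheses on $\delta$.
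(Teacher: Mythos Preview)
Your proposal follows essentially the same route as the paper's proof: identify $2^n Q(G,4)-6^n$ with the out-of-phase autocorrelation sum via the Clifford merit factor identity, use the APC distance $d=\delta+1$ to annihilate all coefficients of weight at most $d-1$, and then bound the surviving terms trivially to obtain $\gamma(d)$. The only point to reconcile is that the paper describes the CMF denominator as a \emph{sum of squares} of the fixed-aperiodic autocorrelation coefficients rather than a sum of magnitudes, so you should check your normalization against the APC references; the strategy and the combinatorial bookkeeping are otherwise identical, and your explicit index accounting and sanity checks are a welcome elaboration of what the paper leaves implicit.
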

\begin{proof}
The graph $G$ corresponds to a Boolean function with APC distance $d=\delta+1$, 
which means that all fixed-aperiodic autocorrelation coefficients~\cite{apc} up to and including 
weight $d-1$ are set to zero. As the Clifford merit factor (CMF) can be computed with the out-of-phase
sum-of-squares of these autocorrelation coefficients in the denominator, 
then we immediately have a lower bound on CMF dependent on $d$.
For a Boolean function $f$ of $n$ variables with APC distance $d$,
it can thus be shown that the sum-of-squares is upper-bounded by
$\gamma(d)$. The CMF is then lower-bounded by
\[
\text{CMF}(f) \ge \frac{6^n}{\gamma(d)}.
\]
When $f$ is a quadratic Boolean function representing a graph $G$, the upper bound on $Q(G,4)$ 
follows.
\end{proof}

A class of self-dual quantum codes known to have high minimum distance are the \emph{quadratic residue codes}.
The graphs corresponding to these codes are \emph{Paley graphs}. To construct a Paley graph on $n$ vertices, where
$n$ must be a prime power and $n \equiv 1 \pmod{4}$, let the elements of the finite field $\mathbb{F}_n$ be 
the set of vertices, and let two vertices, $i$ and $j$, be joined by an edge if and only if their 
difference is a quadratic residue in $\mathbb{F}_n \backslash \{0\}$,
i.e., there exists an $x \in \mathbb{F}_n \backslash \{0\}$ such that $x^2 \equiv i-j$.
This construction will result in a circulant adjacency matrix, where the first
row is called a \emph{Legendre sequence}.
Paley graphs are known to have low independence numbers, and, since they correspond to strong quantum codes, 
the degrees of their interlace polynomials $Q$ are
also low, i.e., the size of the largest independent set in the LC orbit of a Paley graph is small, 
compared to other graphs on the same number of vertices.
This suggests that Paley graphs, due to their high degree of symmetry, have the 
property that their independence numbers remain largely invariant with respect to LC.
Another code construction is the \emph{bordered quadratic residue code}, equivalent to
extending a Paley graph by adding one vertex and connecting it to all existing vertices.
For example, optimal quantum codes of length 5, 6, 29, and 30 can be constructed using Paley graphs 
or extended Paley graphs.

We have previously discovered~\cite{setapaper} that many strong
self-dual quantum codes can be represented as highly structured \emph{nested clique
graphs}. Some of these graphs are shown in Fig.~\ref{graphs}. For instance, Fig.~\ref{3cl4cl} shows
a graph consisting of three 4-cliques. The remaining
edges form a \emph{Hamiltonian cycle}, i.e., a cycle that visits every vertex of the graph exactly once.
Fig.~\ref{5cl4cl} shows five 4-cliques interconnected by one Hamiltonian cycle and two cycles of length 10.
Ignoring edges in the cliques, there are no cycles of length shorter than 5 in the graph.
The graph in Fig.~\ref{2cl3cl} can be viewed as two interconnected 3-cliques.
Note that the graphs in Fig.~\ref{graphs}
have values of $\delta$, $\deg(Q)$, and $Q(G,4)$ that match 
the optimal or best known values in Tables~\ref{tab:degQ}, \ref{tab:Q4}, and~\ref{tab:bounds}.
Also note that they are all \emph{regular} graphs, with all vertices having degree $\delta$,
which means that the number of edges is minimal for the given $\delta$.

\begin{figure}
 \centering
 \subfloat[{$n=6$, $\delta=3$, $\deg(Q)=2$,\newline$\frac{Q(G,4)}{2^n}=18$}]
 {\hspace{3pt}\includegraphics[width=.41\linewidth]{2cl3cl}\hspace{3pt}\label{2cl3cl}}
 \subfloat[{$n=12$, $\delta=5$, $\deg(Q)=4$,\newline$\frac{Q(G,4)}{2^n}=234$}]
 {\hspace{3pt}\includegraphics[width=.41\linewidth]{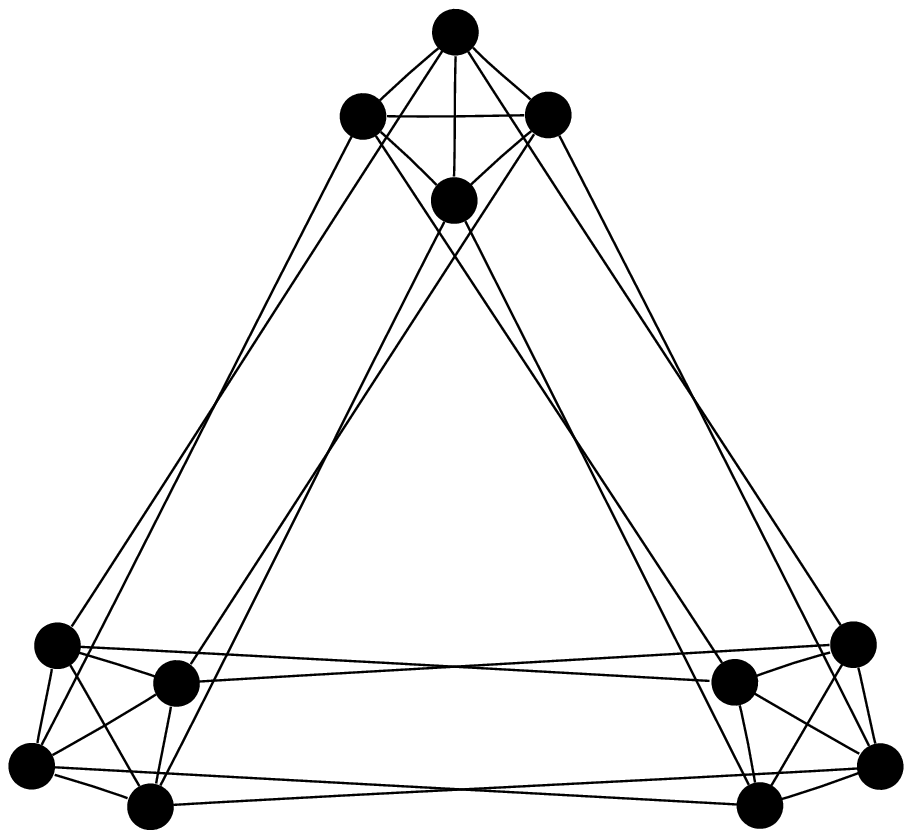}\hspace{3pt}\label{3cl4cl}}
 \subfloat[{$n=20$, $\delta=7$, $\deg(Q)=6$,\newline$\frac{Q(G,4)}{2^n}=6444$}]
 {\hspace{3pt}\includegraphics[width=.41\linewidth]{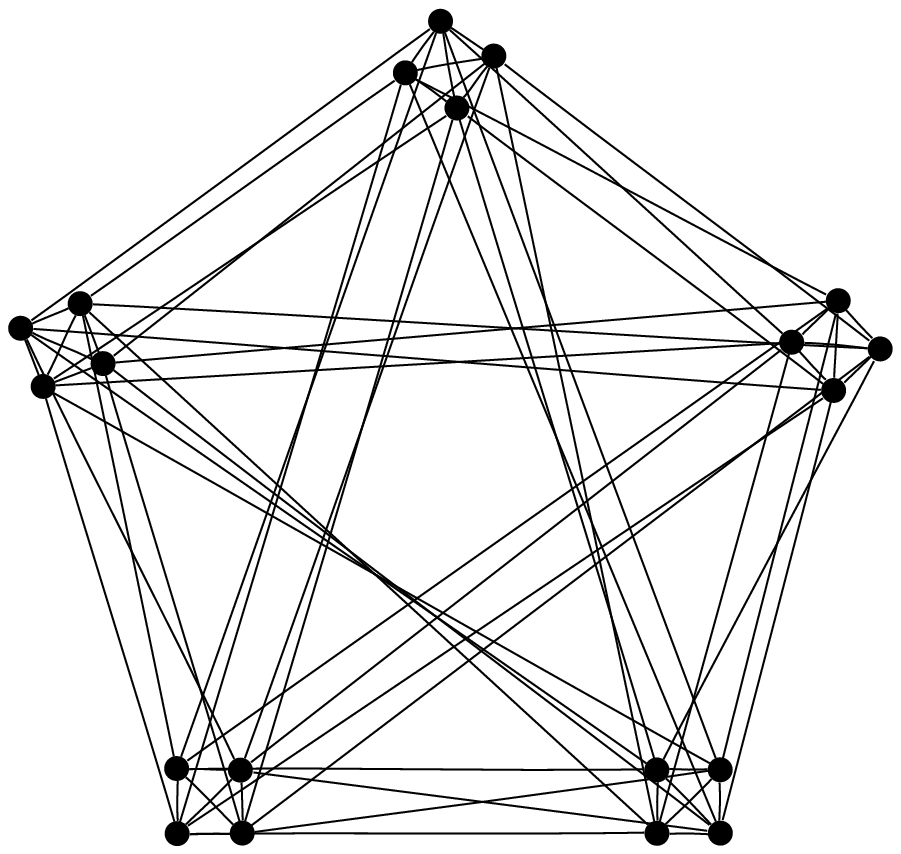}\hspace{3pt}\label{5cl4cl}}
 \caption{Examples of Nested Clique Graphs}\label{graphs}
\end{figure}

It is interesting to observe that the problem of finding good quantum codes, or highly entangled
quantum states, can be reformulated as the problem of finding LC orbits of graphs with certain
properties, and that these properties are related to the interlace polynomials of the graphs.
Even though certain construction techniques are known, as shown above, many open problems
remain, such as providing better bounds on $\delta$, $\deg(Q)$, and $Q(G,4)$, and 
finding new methods for constructing graphs with optimal or good values for these parameters.
It would also be interesting to study possible connections between the observation that the best self-dual quantum
codes have a minimal number of Eulerian subgraphs, and the fact that that many optimal self-dual
quantum codes are of Type~II, i.e., correspond to anti-Eulerian graphs.
Note that all the graphs in Fig.~\ref{graphs} are anti-Eulerian.
The graphs in Fig.~\ref{graphs} also give other clues as to the types of graphs that may optimise
$\deg(Q)$ and $Q(G,4)$.
If a graph contains a $k$-clique, performing LC on any vertex in the clique will produce a
graph with an independent set of size at least $k-1$. Thus the interlace polynomial $Q$ of 
a complete graph will have the highest possible degree of any connected graph. 
This explains why our graphs contain several relatively small cliques.
That the graphs contain a few long cycles reduces the number of cycles in the graph, which makes
sense when we consider that a cycle is an Eulerian subgraph.

It is also possible to say something about which properties should not be present in a graph with 
optimal $\delta$, $\deg(Q)$, or $Q(G,4)$. A bipartite graph on $n$ vertices will have an independence
number of at least $\left\lceil\frac{n}{2}\right\rceil$. Thus the interlace polynomial $Q$ associated 
with an LC orbit that contains a bipartite graph will have degree at least $\left\lceil\frac{n}{2}\right\rceil$.
Note that bipartiteness is preserved by ELC, but not by LC.
In Table~\ref{tab:bipbydistance}, we give the number of LC orbits containing connected bipartite graphs 
on $n$ vertices with a given value of $\delta$. Compare this to Table~\ref{tab:allbydistance}, which includes
LC orbits of all connected graphs.
It also turns out that circle graphs are bad. This is not surprising, given that the 
circle graph obstructions shown in Fig.~\ref{fig:obstructions} all have optimal values of $\delta$. The
obstruction on 6 vertices also has optimal value of $Q(G,4)$, and the two other obstructions have $Q(G,4)$ only
one greater than optimal.
In Table~\ref{tab:circlebydistance}, we give the number of LC orbits of connected circle graphs 
on $n$ vertices with a given value of $\delta$. 

\begin{table}
\centering
\caption{Number of LC Orbits Containing Connected Bipartite Graphs by $\delta$ and $n$}
\label{tab:bipbydistance}
\begin{tabular}{rrrrrrrrrrrr}
\toprule
$\delta \backslash n$ & 
2 & 3 & 4 & 5 & 6 & 7 & 8 & 9 & 10 & 11 & 12 \\
\midrule
1     & 1 & 1 & 2 &  3 &  7 & 14 &  40 & 106 &   352 &   1218 &      5140 \\
2     &   &   &   &    &  1 &  1 &   2 &   4 &    16 &     41 &       215 \\
3     &   &   &   &    &    &    &   1 &     &     2 &      1 &        11 \\
\midrule
All   & 1 & 1 & 2 &  3 &  8 & 15 &  43 & 110 &   370 &   1260 &      5366 \\
\bottomrule
\end{tabular}
\end{table}

\begin{table}
\centering
\caption{Number of LC Orbits of Connected Circle Graphs by $\delta$ and $n$}
\label{tab:circlebydistance}
\begin{tabular}{rrrrrrrrrrrr}
\toprule
$\delta \backslash n$ & 
2 & 3 & 4 & 5 & 6 & 7 & 8 & 9 & 10 & 11 & 12 \\
\midrule
1     & 1 & 1 & 2 &  3 &  9 & 21 &  75 & 277 &  1346 &   7712 &    54,067 \\
2     &   &   &   &  1 &  1 &  2 &   5 &  16 &    55 &    254 &      1474 \\
3     &   &   &   &    &    &    &   1 &     &     2 &      2 &        12 \\
\midrule
All   & 1 & 1 & 2 &  4 & 10 & 23 &  81 & 293 &  1403 &   7968 &    55,553 \\
\bottomrule
\end{tabular}
\end{table}

\begin{table}
\centering
\caption{Number of LC Orbits of Connected Graphs by $\delta$ and $n$}
\label{tab:allbydistance}
\begin{tabular}{rrrrrrrrrrrr}
\toprule
$\delta \backslash n$ & 
2 & 3 & 4 & 5 & 6 & 7 & 8 & 9 & 10 & 11 & 12 \\
\midrule
1     & 1 & 1 & 2 &  3 &  9 & 22 &  85 & 363 &  2436 & 26,750 &   611,036 \\
2     &   &   &   &  1 &  1 &  4 &  11 &  69 &   576 & 11,200 &   467,513 \\
3     &   &   &   &    &  1 &    &   5 &   8 &   120 &   2506 &   195,455 \\
4     &   &   &   &    &    &    &     &     &       &      1 &        63 \\
5     &   &   &   &    &    &    &     &     &       &        &         1 \\
\midrule
All   & 1 & 1 & 2 &  4 & 11 & 26 & 101 & 440 &  3132 & 40,457 & 1,274,068 \\
\bottomrule
\end{tabular}
\end{table}

\paragraph*{Acknowledgements}
This research was supported by the Research Council of Norway.

{\small

}

\end{document}